\documentclass[12pt]{article}

\usepackage[T1]{fontenc}
\usepackage{lmodern}

\usepackage{amsmath, amssymb, amsthm}
\usepackage[margin=2.5cm]{geometry}
\usepackage{color, graphicx}
\usepackage{multirow}
\usepackage{parskip}
\usepackage{tikz}
\usetikzlibrary{decorations.pathreplacing}
\usepackage{etoolbox}
\makeatletter
\patchcmd{\@maketitle}{\LARGE \@title}{\LARGE\bfseries\@title}{}{}

\renewcommand{\@seccntformat}[1]{\csname the#1\endcsname.\quad}
\makeatother

\definecolor{darkblue}{rgb}{0,0,.5}

\usepackage{hyperref}
\hypersetup{
	colorlinks=true,		
	linkcolor=darkblue,		
	citecolor=darkblue,		
	urlcolor=darkblue 		
}

\makeatletter
\def\th@plain{%
	\thm@notefont{}
	\itshape 
}
\def\th@definition{%
	\thm@notefont{}
	\normalfont 
}

\renewenvironment{proof}[1][\proofname]{\par
	\normalfont
	\topsep0\p@\@plus3\p@ \trivlist
	\item[\hskip\labelsep\itshape
	#1\@addpunct{.}]\ignorespaces
}{%
	\qed\endtrivlist
}
\makeatother

\usepackage[capitalize, nameinlink, noabbrev]{cleveref}

\newtheorem{theorem}{Theorem}[section]
\newtheorem{lemma}[theorem]{Lemma}
\newtheorem{corollary}[theorem]{Corollary}
\newtheorem{proposition}[theorem]{Proposition}
\theoremstyle{definition}
\newtheorem{definition}[theorem]{Definition}
\newtheorem{example}[theorem]{Example}
\newtheorem{remark}[theorem]{Remark}

\usepackage[shortlabels]{enumitem}

\usepackage{empheq}
\definecolor{myblue}{rgb}{.8, .8, 1}

\usepackage[numbers,sort&compress]{natbib}

\bibpunct[, ]{[}{]}{,}{n}{,}{,}
\usepackage[numbers,sort&compress]{natbib}
\bibpunct[, ]{[}{]}{,}{n}{,}{,}

\def\<{\langle}
\def\>{\rangle}

\def\cl{\textnormal{cl}\,}
\def\epi{\textnormal{epi}\,}

\def\dom{\textnormal{Dom}\,}

\def\proj{\textnormal{proj}\,}
\newcommand{\R}{\mathbb R}

\def\dom{\mathop{\rm dom\,}}

\def\clco{\mathop{\rm clco\,}}

\def\sign{\mathop{\rm sign\,}}

\def\cl{\mathop{\rm cl\,}}

\begin{document}

\title{Convergence and Stability of a Catching-Up Algorithm for Differential Inclusions with Maximal Monotone Operators}

\author{
Tan H. Cao\thanks{Department of Applied Mathematics and Statistics, State University of New York–Korea, Yeonsu-Gu, Incheon, Republic of Korea. Email: \texttt{tan.cao@stonybrook.edu}.}
~and~
Hassan Saoud\thanks{Department of Mathematics and Natural Sciences \& Center of Applied Mathematics and Bioinformatics (CAMB), Gulf University for Science and Technology, P.O. Box 7207, Hawally 32093, Kuwait. Email: \texttt{saoud.h@gust.edu.kw}.}
}

\date{}

\maketitle
\begin{abstract}
We study a catching-up algorithm for a class of differential inclusions driven by maximal monotone operators with continuous perturbations. Using a decomposition of the monotone operator into the closed convex hull of its single-valued part and the normal cone to a closed convex set, we establish existence of solutions and derive global energy bounds under a mild tangent dissipativity assumption. Under an additional local Lipschitz assumption on the perturbation, we also obtain uniqueness and stability with respect to the initial data.
We then analyze a time-discretized catching-up scheme with variable step sizes and approximate projections. On every finite horizon, we prove convergence of the discrete trajectories to solutions of the continuous problem. A discrete velocity decomposition together with a discrete energy inequality yields uniform boundedness of the iterates, quantitative stability estimates, and explicit error bounds.
We also establish asymptotic feasibility of the predictor step in an $L^2$ sense, as well as a Ces\`aro-type averaged feasibility property, showing that the constraint violations generated by the free step vanish as the discretization is refined. Finally, we illustrate the theory on explicit examples, including a fully explicit one--dimensional test case and a multidimensional constrained dry-friction system.
\end{abstract}

{\small
\noindent{\bfseries Keywords: Differential inclusions;
maximal monotone operators;
catching-up algorithm;
approximate projections;
tangent dissipativity;
projected dynamics;
quantitative stability}

\noindent{\bfseries AMS Subject Classifications: 34A60, 47H05, 65J15, 49J53, 93D20}
}

\section{Introduction}
\label{sec:intro}
The present work is concerned with differential inclusions driven by \emph{maximal monotone operators}. Such systems arise naturally in nonsmooth analysis, mechanics, and optimization, and provide a classical framework for modeling a wide range of nonsmooth
dynamical phenomena. More precisely, let $A:\mathbb{R}^n \rightrightarrows \mathbb{R}^n$ be a maximal monotone operator, and let
$f$ be a continuous mapping defined on $\cl(\dom A)$, the closure of $\dom A$. Given an initial condition $x_0 \in \cl(\dom A)$, we consider the dynamical system
\begin{align}\label{eq:main-DI}
\left\{
\begin{array}{ll}
\dot{x}(t) \in f(x(t)) - A(x(t)) & \text{a.e. } t \in [0,+\infty), \\
x(0)=x_0 \in \cl(\dom A).
\end{array}
\right.
\tag{$P_A$}
\end{align}
Well-posedness for \eqref{eq:main-DI} is classical in monotone operator theory. In particular, the system admits at least one solution for all $t \ge 0$, and the solution is unique when $f$ is Lipschitz continuous; see, for instance,
\cite{Brezis1973,Barbu2010}.\\
A distinctive feature of this setting is that the nonsmooth structure of the dynamics is not purely geometric. While normal cone operators form an important subclass of maximal monotone operators, many models of practical relevance involve additional monotone effects
that are not solely related to constraint reactions. In such situations, nonsmoothness does not arise only from feasibility requirements, but also from interaction mechanisms
that are intrinsic to the evolution law itself. This distinction is important from the viewpoint of numerical approximation. Classical projection-based schemes developed for sweeping processes rely on the fact that all nonsmooth effects are encoded by the normal cone, so that feasibility can be enforced
entirely through successive projections. When additional monotone components are present, this mechanism is no longer sufficient, and projection-based methods cannot be applied directly without further adaptation.\\
From a numerical perspective, differential inclusions governed by maximal monotone operators have traditionally been discretized using implicit or resolvent-based schemes, such as implicit Euler or proximal point methods. These approaches are deeply rooted in
monotone operator theory and convex analysis, and provide robust approximation tools at both the continuous and discrete levels; see, for example, \cite{Rockafellar1976,PeypouquetSorin2010,AttouchCabot2020,AlvarezAttouch2001}. While powerful, these classical schemes are not projection-based and do not naturally enforce feasibility with respect to the constraint set $C:=\cl(\dom A)$.
In contrast, the catching--up scheme considered here is projection-based and preserves feasibility at each iteration through (possibly approximate) projections onto $C$. This places the method closer in spirit to projection-based discretizations of sweeping
processes, while still incorporating an additional set-valued component coming from the maximal monotone operator.\\
Our starting point is a decomposition property for maximal monotone operators whose domain has nonempty interior. This property allows one to write
\[
A \;=\; G \;+\; N_{\cl(\dom A)},
\]
where $G$ is a convex-valued mapping representing the regular part of $A$, and $N_{\cl(\dom A)}$ denotes the normal cone to the closed convex set $\cl(\dom A)$. The mapping $G$ is upper semicontinuous with closed convex values. Setting $C := \cl(\dom A),$
the differential inclusion \eqref{eq:main-DI} can be equivalently rewritten as
\begin{equation}\label{eq:main-decomp}
\dot{x}(t)\in f(x(t)) - G(x(t)) - N_C(x(t)), \qquad x(t)\in C.
\end{equation}
This reformulation brings the problem close to a sweeping process on a \emph{fixed} closed convex set $C$, while keeping the additional set-valued monotone component $G$ in the dynamics. The constraint set $C$ is determined directly by the operator $A$ and is therefore
an intrinsic part of the model; in particular, $C$ is typically noncompact.\\
Within this framework, we develop a \emph{catching-up algorithm} adapted to maximal monotone dynamics. The scheme combines approximate evaluations of the monotone term $G$ with projection steps onto the fixed constraint set $C$. Since the set $C$ is typically
noncompact, a central difficulty is to prevent trajectories of the continuous system, as well as discrete iterates generated by the algorithm, from escaping to infinity in a way that is uniform with respect to the discretization.\\
This leads us to introduce a \emph{tangential dissipativity condition}. 
A key feature of this approach is that it allows us to work without assuming  compactness of the constraint set $C$, which is essential in many applications where $C$ is naturally unbounded. Dissipativity provides a mechanism that  compensates for the lack of compactness by ensuring a global confinement effect: when the state becomes large, the interaction between the state and the projected  effective dynamics produces a restoring force that drives the motion back toward  a bounded region. At the continuous level, this property yields global-in-time boundedness of solutions; at the discrete level, it leads to Lyapunov-type energy inequalities that control the iterates of the catching-up scheme and ensure stability under approximation errors.\\
The sweeping process is a fundamental model for nonsmooth evolutions under unilateral constraints. Since its introduction, it has played a central role in the analysis of constrained dynamical systems; see, for instance, \cite{Moreau1977,Marques1993}. A decisive step in the development of the theory was the introduction of a projection-based time discretization, commonly referred to as the \emph{catching-up algorithm} \cite{Moreau1971,Moreau1972,Moreau1999}. Over the years,
sweeping processes have been extended in many directions, including perturbations, nonconvex constraints, and controlled settings; see, for example, \cite{Marques1987,Castaing1993,Thibault2006,Nacry2018}.
Although widely used, the catching-up algorithm is usually introduced as an auxiliary tool to recover continuous-time solutions, and its discrete-time behavior is rarely studied in detail. In particular, stability, robustness with respect to perturbations, and quantitative error control at the discrete level remain largely unexplored.\\
The closest contribution to the present work is the recent analysis in \cite{Vilches2024}, which studies catching-up algorithms with approximate projections for perturbed sweeping processes. In that framework, the nonsmooth dynamics are governed by a normal cone associated with a constraint set, combined with an external perturbation subject to structural assumptions tailored to sweeping processes.\\
By contrast, the present work addresses differential inclusions driven by a general maximal monotone operator. In our setting, the normal cone appears only after the decomposition and does not capture the full nonsmooth structure of the dynamics. As a consequence, our assumptions are formulated directly at the level of the intrinsic maximal monotone dynamics. In particular, we replace compactness and pointwise growth conditions by a tangential dissipativity condition, which provides a global confinement mechanism well suited to noncompact settings.
Moreover, while approximate projections are frequently used, their impact on the discrete dynamics is rarely analyzed in a systematic way. One of the aims of the present work is to study the catching-up scheme as a discrete dynamical system and to quantify the role of
projection errors in stability and convergence.\\
The main contributions of this paper are threefold. First, we introduce a unified framework for \eqref{eq:main-DI} based on the above
decomposition, linking maximal monotone dynamics to constrained evolutions of the form \eqref{eq:main-decomp}. Second, we design a catching-up scheme adapted to this structure and establish its convergence in noncompact settings. Third, we provide a detailed discrete-level analysis, including stability, 
robustness, and quantitative estimates, which is typically not addressed in the existing literature.\\
Finally, we illustrate the framework through examples from nonsmooth mechanics and constrained motion, showing that maximal monotone operators naturally extend classical sweeping-process models and that the proposed approach provides a practical and flexible
approximation method for such systems.
\section{Notation and preliminaries}
\label{sec:prelim}
We begin this section by collecting the notations employed in the paper. We denote by $\|\cdot\|$ and $\langle\cdot,\cdot\rangle$ the Euclidean norm
and inner product on $\R^n$, and by $B$ and $\overline B$ the open and closed
unit balls, respectively. An open (resp.\ closed) ball of radius $\rho>0$
around a point $x\in\R^n$ is denoted by $B(x;\rho)$ (resp.\ $\overline
B(x;\rho)$). For a set $C\subset\R^n$, $\operatorname{int}C$, $\partial C$,
$\operatorname{cl}C$ and $\operatorname{co}C$ denote the interior, boundary,
closure and convex hull of $C$, while $\operatorname{clco}C :=
\operatorname{cl}(\operatorname{co}C)$ stands for its closed convex hull.  
For a closed convex set $C\subset\R^n$ and $x\in\R^n$, the distance to $C$ is $d_C(x):=\inf_{z\in C}\|x-z\|.$
The metric projection onto $C$ is the set
\[
\proj_C(x):=\{z\in C:\ d_C(x)=\|x-z\|\}.
\]
For $\varepsilon\ge0$, we also use the \emph{approximate projection}
\[
\proj_C^\varepsilon(y):=\{z\in C:\ \|z-y\|^2\le d_C^2(y)+\varepsilon\}.
\]
By construction, the set $\proj_C^\varepsilon(y)$ is nonempty whenever $C$ is
closed, and it is open relative to $C$. It collects the points of $C$ whose
distance to $y$ does not exceed the minimal distance by more than
$\varepsilon$, and thus acts as an approximate projection.\\
For \(T>0\), \(AC([0,T];\R^n)\) denotes the space of absolutely continuous functions on \([0,T]\), and \(C([0,T];\R^n)\) the space of continuous functions on \([0,T]\). 
We denote by \(AC_{\mathrm{loc}}([0,\infty);\R^n)\) the space of functions \(x:[0,\infty)\to\R^n\) such that
\(x|_{[0,T]}\in AC([0,T];\R^n)\) for every \(T>0\). We denote by \(L^p(0,T;\R^n)\), \(1\le p\le\infty\), the usual Lebesgue spaces of \(\R^n\)-valued functions. In particular, \(L^\infty(0,T;\R^n)\) is the space of essentially bounded measurable functions. We write \(\rightharpoonup\) for weak convergence in \(L^p(0,T;\R^n)\) (with \(1\le p<\infty\)), and \(\rightharpoonup^\ast\) for weak-\(^\ast\) convergence in \(L^\infty(0,T;\R^n)\). When no confusion arises, we also write \(L^p(0,T)\) for scalar-valued functions. For \(f\in L^\infty(0,T)\), we denote  \(\|f\|_{L^\infty(0,T)}:=\operatorname*{ess\,sup}_{t\in(0,T)}|f(t)|\), and for \(f\in L^1(0,T)\),
\(\|f\|_{L^1(0,T)}:=\displaystyle\int_0^T |f(t)|\,dt\).
We write \(\rightharpoonup\) for weak convergence in \(L^p(0,T;\R^n)\) (with \(1\le p<\infty\)), and \(\rightharpoonup^\ast\) for weak-\(^\ast\) convergence in \(L^\infty(0,T;\R^n)\).\\
For an extended-real-valued function $\varphi:\R^n\to\R\cup\{+\infty\}$, we
write $\dom\varphi:=\{x:\varphi(x)<+\infty\}$ for its effective domain and
$\epi\varphi:=\{(x,\alpha):\varphi(x)\le\alpha\}$ for its epigraph. The
function is proper if $\dom\varphi\neq\emptyset$ and $\varphi(x)>-\infty$ for
all $x$, and it is lower semicontinuous (l.s.c.) if its epigraph is closed. A proper l.s.c.\ function is convex whenever its epigraph is convex. For a convex $\varphi$ and $x\in\dom\varphi$, a vector $\zeta$ is a subgradient if
\[
\varphi(y)\ge\varphi(x)+\langle\zeta,y-x\rangle \qquad \text{for all } y\in\R^n.
\]
The set of all such vectors is the Fenchel subdifferential $\partial\varphi(x)$. For a nonempty closed convex set $C\subset\R^n$, the indicator function $I_C$ satisfies $\partial I_C(x)=N_C(x),$
which defines the normal cone to $C$ at $x$. The tangent cone to $C$ at $x$ is
\[
T_C(x):=\operatorname{cl}\{d:\exists\,t_k\downarrow0,\ x+t_k d\in C\}.
\]
These cones satisfy the polar relation $N_C(x)=(T_C(x))^\circ$. For convex $C$, both cones are closed and convex, and every vector $u\in\R^n$ admits the orthogonal decomposition
\[
u=\proj_{T_C(x)}u+\proj_{N_C(x)}u.
\]
For \(\delta\ge0\), we define the \(\delta\)-approximate normal cone by
\[
N_C^\delta(x):=\{v\in\mathbb{R}^n:\ \langle v,z-x\rangle\le \delta
\ \text{for all } z\in C\}.
\]
In particular, \(N_C^0(x)=N_C(x)\).\\
A set-valued mapping $A:\R^n\rightrightarrows\R^n$ is monotone if
\[
\forall (y_1,y_2)\in A(x_1)\times A(x_2),\qquad
\langle y_1-y_2,\ x_1-x_2\rangle \ge 0.
\]
It is maximal monotone if its graph cannot be enlarged without violating monotonicity. The domain of $A$ is $\dom A:=\{x:A(x)\neq\emptyset\}$, and we introduce its closure
$C:=\cl(\dom A),$
which is always convex. In contrast, $\dom A$ itself need not be closed or convex (it is nearly convex; see \cite{Rock}). The values $A(x)$ are closed and convex, possibly unbounded or empty.\\
A fundamental example is the Fenchel subdifferential of a proper l.s.c. convex function $\varphi$, for which
\[
\dom(\partial\varphi)\subseteq\dom\varphi
\subseteq\cl(\dom\varphi)=\cl(\dom\partial\varphi).
\]
The interior of the domain plays a key role: a maximal monotone operator $A$ is locally bounded at $x$ if and only if $x\in\operatorname{int}(\dom A)$ (see \cite{Rock1,phelps}). In particular, $\partial\varphi$ is locally bounded on $\operatorname{int}(\dom\varphi)$, and for a convex closed set $C$, the normal cone operator $N_C$ is locally bounded on $\operatorname{int}C$.
For further background, see \cite{Bauschke,Rock}. When $\operatorname{int}(\dom A)\neq\emptyset$, the operator $A$ is
single-valued on a dense subset $E\subset\operatorname{int}(\dom A)$. Define
\begin{equation}\label{eq:A0}
A_0(x):=\{v:\exists\,x_k\in E,\ x_k\to x,\ A(x_k)\to v\}.
\end{equation}
According to \cite[Theorem~12.67]{Rock},
\begin{equation}\label{eq:decomp}
A(x)=\clco A_0(x)+N_C(x),
\end{equation}
and $A$ is continuous on $E$, the set where it is single-valued. Moreover, the set of points where $A$ is differentiable is dense in $\dom A$ and contained in $E$. Thus, $A_0$ is single-valued and continuous on $E$, coincides with $A$ there, and is locally bounded on $\operatorname{int}(\dom A)$. Furthermore,
\[
\cl E=\cl(\dom A_0)=\cl(\dom A).
\]
To better understand the decomposition \eqref{eq:decomp}, we now examine the regularity properties of the map $x\mapsto\clco A_0(x)$.
\begin{proposition}[Upper semicontinuity and local boundedness of $\clco A_0$]
\label{prop:usc-locbnd}
(\cite{SaoudTheraDao2025})
Let $A:\R^n\rightrightarrows\R^n$ be maximal monotone with
$\operatorname{int}(\dom A)\neq\emptyset$, and let $A_0$ be given by
\eqref{eq:A0}. Then the mapping $x\mapsto\clco A_0(x)$ is upper semicontinuous
and locally bounded on $\cl(\dom A)$.
\end{proposition}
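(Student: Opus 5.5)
The plan is to establish two facts about $A_0$ itself and then pass to closed convex hulls. The first is that $\operatorname{gph} A_0$ is closed. The second is a local bound $\sup\{\|v\|: v\in A_0(y),\ y\in \overline B(x;\rho)\cap C\}<\infty$ around each $x\in C:=\cl(\dom A)$. Taking $\clco$ preserves a bound on a neighbourhood, since the closed convex hull of a subset of a ball stays in that ball. A set-valued map with closed graph that is locally bounded is upper semicontinuous. Moreover, the closed graph of $A_0$ together with local boundedness gives closedness of the graph of $x\mapsto \clco A_0(x)$ via a Carathéodory argument: write $v_k\in\operatorname{co}A_0(y_k)$ as a convex combination of $n+1$ points, extract convergent subsequences of the coefficients and of the points, and approximate elements of the closure. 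So usc of $\clco A_0$ on $C$ reduces to the two facts about $A_0$.

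\textbf{Closed graph.} Take $y_k\to x$ and $v_k\in A_0(y_k)$ with $v_k\to v$. By \eqref{eq:A0}, for each $k$ there is $z_k\in E$ with $\|z_k-y_k\|<1/k$ and $\|A(z_k)-v_k\|<1/k$. Then $z_k\to x$ and $A(z_k)\to v$, so $v\in A_0(x)$. This diagonal argument is routine. It also shows $A_0(x)\subset A(x)$ when combined with the closedness of $\operatorname{gph}A$, which holds for every maximal monotone operator.

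\textbf{Local boundedness.} At points $x\in\operatorname{int}(\dom A)$ this is immediate. $A$ is locally bounded there, as recalled before \eqref{eq:A0}. Since $A_0(y)\subset A(y)$ and $A_0(y)$ consists of limits of values of $A$ at nearby points, $A_0$ inherits the bound on a smaller ball.

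At boundary points $x\in\partial C$ I expect the main obstacle. $A$ itself is \emph{not} locally bounded there, since it contains $N_C(x)$. The point of \eqref{eq:decomp} is that $A_0$ only records limits of the single-valued part along $E\subset\operatorname{int}(\dom A)$. My first attempt would use monotonicity against a fixed interior point. Pick $\bar x\in\operatorname{int}(\dom A)$ and $r>0$ with $\overline B(\bar x;r)\subset \operatorname{int}(\dom A)$ and $\|A\|\le M$ on it. For $z\in E$ near $x$ and any $w\in \overline B(0;1)$, monotonicity gives
\[
\langle A(z),\ \bar x+rw-z\rangle\le \langle u_w,\ \bar x+rw-z\rangle,\qquad u_w\in A(\bar x+rw).
\]
This controls the component of $A(z)$ in directions of the form $\bar x+rw-z$, that is, directions pointing into $C$. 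It bounds $A(z)$ from one side only, roughly $\sup_w\langle A(z),\bar x+rw-z\rangle\le M(\|\bar x-z\|+r)$.

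The remaining outward (normal-like) component must be controlled separately. There I would use the structure of $E$: the points $z_k\in E$ approaching $x$ are interior points, and $A(z_k)$ is a genuine limit in \eqref{eq:A0}. I suspect that the required bound in the outward direction is exactly where the hypothesis of \cite{SaoudTheraDao2025} is needed, and that it may require restricting the notion of local boundedness to the elements actually attained as limits. I would check carefully, on the example $A=\partial\varphi$ with $\varphi(x)=-\sqrt{x}$ on $[0,\infty)$, whether $A_0$ stays bounded near $0$ or whether $A_0(0)=\emptyset$ with values blowing up nearby. If the latter happens, the statement must be read with the convention used in \cite{SaoudTheraDao2025}, and the proof would be adjusted to that convention, for instance by restricting to $\dom A_0$.
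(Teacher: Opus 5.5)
Your reduction is correct as far as it goes. The diagonal argument gives a closed graph for $A_0$ (hence $A_0(x)\subset A(x)$). Closed graph plus local boundedness yields upper semicontinuity in $\R^n$, the Carath\'eodory step transfers this to $\clco A_0$, and local boundedness at points of $\operatorname{int}(\dom A)$ is inherited from $A$. But the step you flag at points of $\partial C$ is a gap that no argument can fill, because the statement as written is false, and your own test case shows it.

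Take $\varphi(x)=-\sqrt{x}$ on $[0,\infty)$ and $+\infty$ elsewhere. A subgradient $\zeta$ at $0$ would need $-\sqrt{y}\ge \zeta y$ for all $y>0$, so $\partial\varphi(0)=\emptyset$. Hence $A=\partial\varphi$ is maximal monotone with $\dom A=(0,\infty)=\operatorname{int}(\dom A)$ and $C=[0,\infty)$. Moreover $A_0(y)=\{-1/(2\sqrt{y})\}$ for $y>0$, while $A_0(0)=\emptyset$. So $\clco A_0$ is unbounded on every $C\cap\overline B(0;r)$. It is also not upper semicontinuous at $0$, since its value there is empty and the nearby values are not.

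Your fallback of restricting to $\dom A_0$ does not rescue the statement either. Take the closed convex function $\varphi(x_1,x_2)=x_2^2/x_1$ on $\{x_1>0\}$, with $\varphi(0,0)=0$ and $+\infty$ otherwise. Then $\nabla\varphi(x)=(-s^2,2s)$ with $s=x_2/x_1$, so $A_0(0,0)=\{(-s^2,2s): s\in\R\}$ is itself unbounded, although $(0,0)\in\dom A\cap\dom A_0$.

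The paper gives no proof beyond the citation, so nothing there closes this step. Note also that the later claim, in the proof of Theorem~\ref{thm:limit_solution}, that $G$ has nonempty compact values on $C$ fails in both examples.

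What your argument does prove is the statement on $\operatorname{int}(\dom A)$. It also proves it on all of $C$ under an extra hypothesis: for each $x\in\partial C$ there is $r>0$ with $\sup\{\|A(z)\|: z\in E\cap B(x;r)\}<\infty$. Under that hypothesis, your closed-graph and Carath\'eodory steps give upper semicontinuity with compact convex values. Nonemptiness at boundary points follows by choosing $z_k\in E$ with $z_k\to x$ (possible since $\cl E=C$) and extracting a convergent subsequence of $A(z_k)$.

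In the paper's setting, this hypothesis is exactly what \eqref{A1}--\eqref{A2} supply. For $z\in E$ one has $A(z)\in G(z)$, so $f(z)-A(z)\in F(z)$, and therefore $\|A(z)\|\le\|f(z)\|+a+b\|z\|$. This is bounded near $x$ because $f$ is locally bounded on $C$.

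So the proposition must either be restricted to $\operatorname{int}(\dom A)$ or carry such a boundedness assumption. The outward bound you were looking for does not follow from maximal monotonicity alone.
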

The decomposition \eqref{eq:decomp} applies in particular to
$A=\partial\varphi$ for a proper l.s.c.\ convex function $\varphi$. Since such
a $\varphi$ is locally Lipschitz on $\operatorname{int}(\dom\varphi)$, we have
$\clco A_0=\partial_C\varphi$, the Clarke subdifferential. Thus,
\[
\partial\varphi(x)=\partial_C\varphi(x)+N_{\cl(\dom\varphi)}(x)
\]
(see also \cite[Theorem~25.6]{Rock2}). Moreover, $\partial_C\varphi$ is locally
bounded on $\operatorname{int}(\dom\varphi)$.
\section{Problem Setting and Framework}\label{sec:setting}
We now turn to the differential inclusion that is the focus of this work.
We refer to problem \((P_A)\) introduced in the introduction.
Using the decomposition~\eqref{eq:decomp}, we obtain the equivalent formulation
\begin{equation}\label{eq:decomp-DI}
\dot{x}(t)\in f(x(t)) - G(x(t)) - N_C(x(t)),
\qquad t\ge 0,
\end{equation}
where \(G:=\clco A_0\) and \(C:=\cl(\dom A)\). The decomposition \eqref{eq:decomp} allows us to separate the smooth
perturbation \(f-G\) from the geometric constraint encoded by \(N_C\), which is crucial for both the continuous and discrete analyses developed below. For convenience, we introduce the compact notation
\[
F(x):=f(x)-G(x),
\]
so that \eqref{eq:decomp-DI} can be rewritten as
\begin{align}\label{eq:F-formulation}
\left\{
\begin{array}{ll}
\dot{x}(t)\in F(x(t)) - N_C(x(t)) & \text{a.e. } t\in[0,+\infty),\\
x(0)=x_0 \in C,
\end{array}
\right.
\tag{$P_F$}
\end{align}
The analysis throughout the paper relies on the following standing assumptions.
\begin{enumerate}[label=(A\arabic*), ref=A\arabic*]

\item \textbf{Perturbation $f$.}\label{A1}
$f:C\to\R^n$ is continuous and locally bounded.
\item \textbf{Linear growth of $F=f-G$.}\label{A2}
There exist constants $a,b\ge0$ such that
\[
\sup_{u\in F(x)}\|u\|\le a+b\|x\|,
\qquad x\in C.
\]
\item \textbf{Tangent dissipativity at infinity.}\label{A3}
There exist constants $R_\star,M,\gamma>0$ such that for all $x\in C$ with
$\|x\|\ge R_\star$,
\[
\sup_{v\in \proj_{T_C(x)}F(x)} \langle x, v\rangle
\;\le\; M-\gamma\|x\|^2,
\]
where
$\proj_{T_C(x)}F(x)
:= \{\proj_{T_C(x)}u:\ u\in F(x)\}.$
\end{enumerate}
\paragraph{Meaning and role of assumption~\eqref{A3}.} Assumption~\eqref{A3} imposes a \emph{dissipativity condition on the admissible velocities of the projected dynamics}. More precisely, for all sufficiently large $\|x\|$, every vector $v \in \proj_{T_C(x)}F(x)$
satisfies
\[
\langle x, v\rangle \le M - \gamma \|x\|^2.
\]
This condition means that, outside a sufficiently large ball, all admissible velocities have a strictly inward radial component. In particular, the dynamics cannot generate motions that drift outward along the constraint set $C$.
The formulation of~\eqref{A3} in terms of the set $\proj_{T_C(x)}F(x)$ is deliberate. Indeed, the evolution governed by~\eqref{eq:F-formulation} is equivalent to a differential inclusion of the form
\[
\dot{x}(t)\in \proj_{T_C(x(t))}F(x(t))
\quad \text{a.e. } t\ge 0,
\]
so that $\proj_{T_C(x)}F(x)$ represents the set of all admissible velocities of
the system. Stating the dissipativity condition directly on this set avoids an artificial decomposition into tangent and normal components and reflects the actual geometry of the dynamics.\\
From an analytical viewpoint, this inward drift provides a substitute for compactness of $C$. It yields a Lyapunov-type estimate for the function $V(x)=\tfrac12\|x\|^2$, ensures that solutions cannot escape to infinity, and leads to uniform boundedness of trajectories. This property is crucial in both the continuous analysis (see Proposition~\ref{prop:cont_energy}) and the discrete
analysis of the catching-up scheme, where it generates the dissipative term that controls the growth of the iterates. Consequently, assumption~\eqref{A3} plays a
central role in establishing stability and quantitative convergence results throughout the paper.\\
In the analysis below, this condition will be extended to the whole set $C$ by combining~\eqref{A3} with the linear growth assumption~\eqref{A2}.
\begin{lemma}[Globalized dissipativity]\label{lem:A3-global}
Assume~\eqref{A2}--\eqref{A3}. Define
\[
\widetilde M:=\max\{M,\;R_\ast(a+bR_\ast)+\gamma R_\ast^2\}.
\]
Then, for every $x\in C$,
\begin{equation}\label{eq:A3_global}
\sup_{v\in \proj_{T_C(x)}F(x)} \langle x,v\rangle
\le \widetilde M-\gamma\|x\|^2.
\end{equation}
\end{lemma}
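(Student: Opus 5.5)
Fix $x\in C$ and $v\in\proj_{T_C(x)}F(x)$, say $v=\proj_{T_C(x)}u$ with $u\in F(x)$. We split into two cases according to whether $\|x\|\ge R_\ast$ or $\|x\|<R_\ast$. In the first case \eqref{A3} applies directly and gives $\langle x,v\rangle\le M-\gamma\|x\|^2\le \widetilde M-\gamma\|x\|^2$, since $\widetilde M\ge M$. Taking the supremum over $v$ yields \eqref{eq:A3_global} for such $x$.

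For $\|x\|<R_\ast$ we use only the linear growth \eqref{A2} together with nonexpansiveness of the projection. Since $T_C(x)$ is a nonempty closed convex cone containing $0$, we have $\proj_{T_C(x)}0=0$. Hence
\[
\|v\|=\|\proj_{T_C(x)}u-\proj_{T_C(x)}0\|\le\|u\|\le a+b\|x\|.
\]
Cauchy--Schwarz then gives
\[
\langle x,v\rangle\le\|x\|(a+b\|x\|)\le R_\ast(a+bR_\ast).
\]
We add and subtract $\gamma\|x\|^2\le\gamma R_\ast^2$:
\[
\langle x,v\rangle\le R_\ast(a+bR_\ast)+\gamma R_\ast^2-\gamma\|x\|^2\le\widetilde M-\gamma\|x\|^2 .
\]
Taking the supremum over $v$ finishes this case, and combining the two cases gives \eqref{eq:A3_global} on all of $C$.

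There is no serious obstacle. The only points needing care are the bound $\|\proj_{T_C(x)}u\|\le\|u\|$, which follows from the projection onto a closed convex cone being $1$-Lipschitz and fixing the origin, and the monotonicity of $r\mapsto r(a+br)$ on $[0,\infty)$, which holds because $a,b\ge0$.
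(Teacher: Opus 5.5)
Your proposal is correct and follows essentially the same route as the paper's proof: a case split at $\|x\|=R_\ast$, with \eqref{A3} handling the large-norm case. In the small-norm case both arguments use $\|\proj_{T_C(x)}u\|\le\|u\|\le a+b\|x\|$, Cauchy--Schwarz, and the slack $\gamma R_\ast^2-\gamma\|x\|^2\ge 0$ built into $\widetilde M$.
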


\begin{proof}
Let $x\in C$. If $\|x\|\ge R_\ast$, \eqref{A3} gives
\[
\sup_{v\in \proj_{T_C(x)}F(x)} \langle x,v\rangle
\le M-\gamma\|x\|^2
\le \widetilde M-\gamma\|x\|^2.
\]
If $\|x\|\le R_\ast$ and $u\in F(x)$, then by \eqref{A2},
\[
\|\proj_{T_C(x)}u\|\le \|u\|\le a+b\|x\|\le a+bR_\ast,
\]
so
\[
\langle x,\proj_{T_C(x)}u\rangle
\le \|x\|\,\|\proj_{T_C(x)}u\|
\le R_\ast(a+bR_\ast)
\le \widetilde M-\gamma\|x\|^2.
\]
Taking the supremum over $u\in F(x)$ yields~\eqref{eq:A3_global}.
\end{proof}
\section{Dissipative Energy Bounds: Continuous and Discrete Levels}\label{sec:energy}
In this section we establish the key energy estimates that control the growth of trajectories of the differential inclusion~\eqref{eq:F-formulation}. Under the tangent dissipativity condition~\eqref{A3}, combined with the linear growth assumption~\eqref{A2}, the dynamics admits a global dissipative structure: the projected vector field generates a restoring effect that prevents trajectories from escaping to infinity.\\
Our analysis proceeds at two levels. First, we derive a Lyapunov-type inequality for all solutions of~\eqref{eq:F-formulation}, ensuring boundedness of the continuous dynamics on any finite horizon. Second, we show that the discrete trajectories produced by the catching--up scheme satisfy a corresponding estimate under the same assumptions. In particular, no compactness of \(C\) is required; all control comes from the dissipativity condition~\eqref{A3}.\\
To prepare for the Lyapunov argument, recall that the decomposed formulation~\eqref{eq:F-formulation} is equivalent, for convex \(C\), to the projected dynamics (see~\cite{Cornet1983}, Theorem~2.3):
\begin{equation}\label{eq:proj-dynamics}
\dot{x}(t) \in \proj_{T_C(x(t))} F(x(t)).
\end{equation}
This identity shows that the velocity \(\dot{x}(t)\) is the tangential component of \(F(x(t))\), while the normal component is compensated by the reaction from the constraint. In particular, whenever the trajectory reaches \(\partial C\), the dynamics produces an inward motion that preserves feasibility, i.e., \(x(t)\in C\) for all \(t\ge0\). This geometric interpretation underlies the Lyapunov analysis developed below.
\begin{proposition}[Continuous energy estimate and global bound]\label{prop:cont_energy}
Let \(x(\cdot)\) be any absolutely continuous solution of \eqref{eq:F-formulation},
and let \(V(x)=\tfrac12\|x\|^{2}\). Assume~\eqref{A2}--\eqref{A3}.\\
Define, for \(t\ge0\),
\begin{equation}\label{eq:beta-def}
\beta(t)
:= e^{-2\gamma t}\|x_0\|^2
   + \frac{\widetilde M}{\gamma}\bigl(1-e^{-2\gamma t}\bigr),
\end{equation}
where \(\widetilde M\) is given by Lemma~\ref{lem:A3-global}.
Then the following hold:

\begin{enumerate}
\item[\emph{(i)}] For almost every \(t\ge0\),
\begin{equation}\label{eq:Vdot-estimate}
\dot V(x(t)) \le \widetilde M - \gamma \|x(t)\|^{2}.
\end{equation}
\item[\emph{(ii)}] For all \(t\ge0\),
\begin{equation}\label{eq:cont-energy-bound}
\|x(t)\|^{2} \le \beta(t).
\end{equation}
In particular, every trajectory remains uniformly bounded on \([0,+\infty)\).
\end{enumerate}
\end{proposition}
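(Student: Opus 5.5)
The plan is a standard Lyapunov argument on $V(x(t))=\tfrac12\|x(t)\|^2$, combined with the projected-dynamics reformulation \eqref{eq:proj-dynamics} and the globalized dissipativity of Lemma~\ref{lem:A3-global}.

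\emph{Part (i).} Since $x(\cdot)$ is absolutely continuous, so is $t\mapsto V(x(t))$ on every compact interval, and for almost every $t$ we have $\dot V(x(t))=\langle x(t),\dot x(t)\rangle$. By the equivalence of \eqref{eq:F-formulation} with the projected dynamics \eqref{eq:proj-dynamics} (for convex $C$, \cite{Cornet1983}), for a.e.\ $t$ there is $u(t)\in F(x(t))$ with $\dot x(t)=\proj_{T_C(x(t))}u(t)$. Invariance $x(t)\in C$ also holds, so Lemma~\ref{lem:A3-global} applies at $x=x(t)$ with $v=\dot x(t)$. This gives $\dot V(x(t))\le \widetilde M-\gamma\|x(t)\|^2$, which is \eqref{eq:Vdot-estimate}.

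The only delicate point is the use of the Cornet equivalence. If one wants to avoid it, one can argue directly. Write $\dot x(t)=u(t)-n(t)$ with $n(t)\in N_C(x(t))$. At a.e.\ $t$ both one-sided difference quotients of $x$ at $t$ lie in $\pm T_C(x(t))$, so $\dot x(t)\in T_C(x(t))\cap(-T_C(x(t)))$. Hence $\langle n(t),\dot x(t)\rangle=0$, and then Moreau's decomposition yields $\dot x(t)=\proj_{T_C(x(t))}u(t)$. I expect this justification to be the main (though mild) obstacle; the rest is calculus.

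\emph{Part (ii).} Set $\phi(t):=\|x(t)\|^2$, which is absolutely continuous. Part (i) reads $\dot\phi\le 2\widetilde M-2\gamma\phi$ a.e. Multiply by the integrating factor $e^{2\gamma t}$: the function $t\mapsto e^{2\gamma t}\phi(t)$ is absolutely continuous with derivative at most $2\widetilde M e^{2\gamma t}$ a.e. Integrating over $[0,t]$ gives
\[
e^{2\gamma t}\phi(t)\le \|x_0\|^2+\frac{\widetilde M}{\gamma}\bigl(e^{2\gamma t}-1\bigr),
\]
and multiplying by $e^{-2\gamma t}$ yields $\phi(t)\le\beta(t)$, which is \eqref{eq:cont-energy-bound}.

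Finally, $\beta(t)$ is a convex combination of $\|x_0\|^2$ and $\widetilde M/\gamma$, so $\beta(t)\le\max\{\|x_0\|^2,\widetilde M/\gamma\}$ for all $t\ge0$. This gives the uniform bound on $[0,+\infty)$.
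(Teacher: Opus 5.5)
Your proof is correct and follows the paper's argument: the projected-dynamics identity \eqref{eq:proj-dynamics}, then the chain rule with Lemma~\ref{lem:A3-global} for (i), and the integrating factor $e^{2\gamma t}$ for (ii). Your optional direct justification, that $\dot x(t)\in T_C(x(t))\cap(-T_C(x(t)))$ forces $\dot x(t)=\proj_{T_C(x(t))}u(t)$, is a valid self-contained replacement for the Cornet citation. Your convex-combination remark makes explicit the uniform bound that the paper only spells out later, in Subsection~\ref{subsec:global-wellposed}.
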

\begin{proof}
By \eqref{eq:proj-dynamics}, for almost every \(t\),
\[
\dot x(t)\in \proj_{T_C(x(t))}F(x(t)).
\]
Since \(V(x)=\tfrac12\|x\|^2\) is \(C^1\), the chain rule yields
\[
\dot V(x(t))
= \langle x(t),\dot x(t)\rangle
\le \sup_{v\in \proj_{T_C(x(t))}F(x(t))} \langle x(t), v\rangle.
\]
By Lemma~\ref{lem:A3-global}, for every $x\in C$,
\[
\sup_{v\in \proj_{T_C(x)}F(x)} \langle x, v\rangle
\le \widetilde M-\gamma\|x\|^2.
\]
Therefore, for almost every $t\ge0$,
\[
\dot V(x(t))
\le \widetilde M-\gamma\|x(t)\|^2
= \widetilde M - 2\gamma V(x(t)),
\]
which proves~\eqref{eq:Vdot-estimate}.\\
To prove part~(ii), multiply the differential inequality
\(\dot V(t)\le \widetilde M - 2\gamma V(t)\) by \(e^{2\gamma t}\):
\[
\frac{d}{dt}\bigl(e^{2\gamma t}V(t)\bigr)
\le \widetilde M e^{2\gamma t}.
\]
Integrating from \(0\) to \(t\) and dividing by \(e^{2\gamma t}\), we obtain
\[
V(t)
\le e^{-2\gamma t} V(0)
   + \frac{\widetilde M}{2\gamma}\bigl(1 - e^{-2\gamma t}\bigr),
\]
that is,
\[
\|x(t)\|^2
\le e^{-2\gamma t}\|x_0\|^2
   + \frac{\widetilde M}{\gamma}\bigl(1 - e^{-2\gamma t}\bigr).
\]
This proves~\eqref{eq:cont-energy-bound}.
\end{proof}
\begin{remark}[Geometric interpretation]\label{rem:geom-dissipativity}
The dissipativity condition~\eqref{A3} implies that, for $\|x\|$ sufficiently large,
all admissible velocities $v \in \proj_{T_C(x)}F(x)$ satisfy $\langle x, v \rangle \le M - \gamma\|x\|^2,$
and therefore point strictly inward with respect to large spheres.
In particular, even along directions tangent to the constraint set $C$, the dynamics
cannot generate outward motion at infinity.
In view of Lemma~\ref{lem:A3-global}, this inward drift can be extended to a global estimate,
showing that the restoring effect induced by the projected dynamics acts on the whole set $C$.
\end{remark}
\subsection{Discrete Catching-Up Scheme and Energy Control}
\label{subsec:discrete}
We now derive the discrete counterpart of the continuous energy estimate obtained in Proposition~\ref{prop:cont_energy}. 
This provides a Lyapunov-type inequality for the iterates of the approximate
catching--up algorithm on bounded finite horizons.
\begin{definition}[Catching--up scheme: predictor--projection]\label{def:scheme_energy}
Let the stepsizes and projection tolerances satisfy
\[
\mu_k>0,\qquad \mu_k\downarrow 0,\qquad \sum_{k=0}^\infty \mu_k=\infty,
\qquad 
\varepsilon_k\ge 0,\qquad \frac{\varepsilon_k}{\mu_k^2}\to 0 .
\]
Starting from an initial point \(x_0\in C\), define recursively the iterates
\[
y_{k+1} = x_k + \mu_k w_k,\qquad w_k\in F(x_k),
\]
\[
x_{k+1}\in \proj_C^{\varepsilon_k}(y_{k+1})
:=\Bigl\{\,z\in C:\ \|z-y_{k+1}\|^2 \le d_C^2(y_{k+1}) + \varepsilon_k\,\Bigr\}.
\]
The map \(x_k\mapsto y_{k+1}\) is the \emph{prediction step}, and the inclusion 
\(x_{k+1}\in \proj_C^{\varepsilon_k}(y_{k+1})\) is the \emph{correction step} that restores feasibility.
The errors \(\varepsilon_k\) allow a controlled inexact projection, a feature used in the convergence analysis.
\end{definition}
\begin{lemma}[Discrete energy inequality and boundedness on finite horizons]\label{lem:disc_energy}
Assume~\eqref{A2}--\eqref{A3} and let $(x_k)$ be generated by the catching--up
scheme of Definition~\ref{def:scheme_energy}. Fix \(T>0\). For a given discretization \((\mu_k)\), we define
\[
t_0:=0,\qquad 
t_k:=\sum_{i=0}^{k-1}\mu_i \ (k\ge1),\qquad
k_T:=\max\{k\in\mathbb N:\ t_k\le T\}.
\]
Then, for every $c\in(0,2\gamma)$, there exist constants
$C_0(T),C_1(T)\ge0$, depending only on $T$, $c$, the constants in
\eqref{A2}--\eqref{A3}, the initial data $x_0$, and the step-size and error
sequences restricted to $[0,T]$, such that
\begin{equation}\label{eq:disc_energy}
\|x_{k+1}\|^2
\le
\|x_k\|^2-c\mu_k\|x_k\|^2
+C_0(T)\mu_k+C_1(T)\mu_k^2,
\qquad 0\le k<k_T,
\end{equation}
and, in particular, there exists a constant $K(T)>0$, depending only on the same
quantities, such that
\begin{equation}\label{eq:boundedness}
\max_{0\le k\le k_T}\|x_k\|^2\le K(T).
\end{equation}
\end{lemma}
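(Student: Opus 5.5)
The plan is to get \eqref{eq:boundedness} first, by a crude Gronwall-type argument that uses only the linear growth \eqref{A2}. The dissipative form \eqref{eq:disc_energy} then follows by reinserting the term $-c\mu_k\|x_k\|^2$ at the cost of enlarging $C_0(T)$, which is allowed because $C_0(T)$ may depend on $T$.

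\emph{Step 1: the inexact projection.} Let $p_{k+1}:=\proj_C(y_{k+1})$, which is single-valued since $C$ is closed and convex. For $z\in C$, the variational inequality $\langle y_{k+1}-p_{k+1},z-p_{k+1}\rangle\le0$ gives
\[
\|z-y_{k+1}\|^2\ge \|z-p_{k+1}\|^2+d_C^2(y_{k+1}).
\]
Hence $x_{k+1}\in\proj_C^{\varepsilon_k}(y_{k+1})$ implies $\|x_{k+1}-p_{k+1}\|\le\sqrt{\varepsilon_k}$. Since $\varepsilon_k/\mu_k^2\to0$, the quantity $s:=\sup_k\sqrt{\varepsilon_k}/\mu_k$ is finite, so $\|x_{k+1}-p_{k+1}\|\le s\mu_k$.

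\emph{Step 2: the exact step.} Since $x_k\in C$, we have $x_k=\proj_C(x_k)$. Nonexpansiveness of $\proj_C$ and \eqref{A2} give
\[
\|p_{k+1}-x_k\|\le\mu_k\|w_k\|\le\mu_k(a+b\|x_k\|).
\]
Expanding,
\[
\|p_{k+1}\|^2=\|x_k\|^2+2\langle x_k,p_{k+1}-x_k\rangle+\|p_{k+1}-x_k\|^2\le(1+L\mu_k)\|x_k\|^2+L\mu_k+L\mu_k^2(1+\|x_k\|^2)
\]
for some $L$ depending only on $a,b$. Combining with Step 1 via
\[
\|x_{k+1}\|^2\le\|p_{k+1}\|^2+2s\mu_k\|p_{k+1}\|+s^2\mu_k^2
\]
and $2s\mu_k\|p\|\le\mu_k(1+s^2\|p\|^2)$, we obtain
\[
\|x_{k+1}\|^2\le(1+L'\mu_k)\|x_k\|^2+L'\mu_k
\]
for $0\le k<k_T$. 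Here $L'$ depends only on $a,b,s$ and $\mu_0\ge\mu_k$. Iterating, and using $1+u\le e^u$ together with $\sum_{i<k_T}\mu_i=t_{k_T}\le T$, yields
\[
\|x_k\|^2\le e^{L'T}\bigl(\|x_0\|^2+L'T\bigr)=:K(T),
\]
which is \eqref{eq:boundedness}.

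\emph{Step 3: dissipative form.} Returning to the one-step inequality, add and subtract $c\mu_k\|x_k\|^2$:
\[
\|x_{k+1}\|^2\le\|x_k\|^2-c\mu_k\|x_k\|^2+\mu_k\bigl[(L'+c)K(T)+L'\bigr].
\]
The bracket defines $C_0(T)$, and the $\mu_k^2$-terms from Step 2 go into $C_1(T)$. This gives \eqref{eq:disc_energy} for every $c\in(0,2\gamma)$, with constants depending only on the permitted quantities.

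If one wants constants that use \eqref{A3} genuinely rather than only $T$-dependently, the main obstacle is the cross term $\langle x_k,p_{k+1}-x_k\rangle$. This term must be compared with $\mu_k\langle x_k,\proj_{T_C(x_k)}w_k\rangle$, so that Lemma~\ref{lem:A3-global} applies and produces $2\mu_k(\widetilde M-\gamma\|x_k\|^2)$, leaving room for any $c<2\gamma$. I would attempt this by noting that the difference quotient $(\proj_C(x_k+\mu w_k)-x_k)/\mu$ tends to $\proj_{T_C(x_k)}w_k$ as $\mu\downarrow0$. However, no uniform rate is available without further regularity of $C$. For that reason the plan relies on the finite-horizon absorption above, which the statement explicitly permits.
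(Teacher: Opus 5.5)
Your proof is correct. It shares the paper's overall skeleton: a crude finite-horizon Gr\"onwall bound giving $K(T)$, which is then reinserted. The difference is in how the one-step estimate is obtained.

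\textbf{The paper's route.} The paper expands $\|x_k+\mu_k w_k+p_k\|^2$ with $p_k=x_{k+1}-y_{k+1}$. It bounds $\|p_k\|^2\le\mu_k^2\|w_k\|^2+\varepsilon_k$ by testing the $\varepsilon_k$-projection inequality with $z=x_k$. It then genuinely uses \eqref{A3}: it splits $w_k$ into $\proj_{T_C(x_k)}w_k$ plus a normal part and applies Lemma~\ref{lem:A3-global} to the tangential part. Both the normal part and $2\langle x_k,p_k\rangle$ are absorbed by Young's inequality with parameters $\delta,\eta$ satisfying $\delta+\eta=2\gamma-c$, which is where the restriction $c<2\gamma$ comes from. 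This is how the paper avoids the obstacle you flag at the end. It never compares the projection step with $\mu_k\proj_{T_C(x_k)}w_k$; it simply pays for the mismatch with remainders $(\delta^{-1}+\eta^{-1})\mu_k\|w_k\|^2$. Since these contain $b^2\mu_k\|x_k\|^2$, the paper also needs a Gr\"onwall factor $e^{\Lambda_T^+T}$, and like you it ends with $T$-dependent constants (via $M_T$).

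\textbf{Your route.} You use the exact projection, its nonexpansiveness, and the estimate $\|x_{k+1}-\proj_C(y_{k+1})\|\le\sqrt{\varepsilon_k}$. The paper proves that estimate only later, inside Lemma~\ref{lem:corrector-stability}. You never use \eqref{A3}.

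\textbf{What each buys.} Your argument is shorter and makes transparent that, with $T$-dependent constants, the lemma as stated follows from \eqref{A2} alone, and \eqref{eq:disc_energy} holds for every $c>0$. The paper's route yields the intermediate inequality \eqref{eq:main}, which keeps genuine dissipation with only $\|w_k\|^2$- and $\varepsilon_k$-remainders. When $b$ is small relative to $\gamma$ (e.g.\ $b=0$), these remainders are controlled without any Gr\"onwall factor. So \eqref{eq:main} is the form that could give time-uniform discrete bounds mirroring Proposition~\ref{prop:cont_energy}, although the paper does not exploit this. Your $C_0(T)$, by contrast, necessarily scales like $K(T)\sim e^{L'T}$.

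\textbf{One small fix.} Define $s:=\sup_{k<k_T}\sqrt{\varepsilon_k}/\mu_k$ (the paper's $\sqrt{q_T}$) rather than a supremum over all $k$. Then your constants depend only on the sequences restricted to $[0,T]$, as the statement requires.
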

\begin{proof}
Fix $T>0$ and set $a_k:=\|x_k\|^2$ for $0\le k\le k_T$. Let $c\in(0,2\gamma)$ and choose $\delta,\eta>0$ such that $\delta+\eta=2\gamma-c$.

\noindent\textbf{Step 1. Expansion and projection error.}
From Definition~\ref{def:scheme_energy},
$y_{k+1}=x_k+\mu_k w_k$, $w_k\in F(x_k)$, and $x_{k+1}\in \proj_C^{\varepsilon_k}(y_{k+1})$.
Setting $p_k:=x_{k+1}-y_{k+1}$, the $\varepsilon_k$-projection inequality with $z=x_k$ gives
\begin{equation}\label{eq:pk-bound}
\|p_k\|^2\le \mu_k^2\|w_k\|^2+\varepsilon_k.
\end{equation}
Since $x_{k+1}=x_k+\mu_k w_k+p_k$, we obtain
\begin{align}\label{eq:expand}
\|x_{k+1}\|^2
&=\|x_k\|^2+2\mu_k\langle x_k,w_k\rangle+\mu_k^2\|w_k\|^2
+2\langle x_k,p_k\rangle+2\mu_k\langle p_k,w_k\rangle+\|p_k\|^2.
\end{align}
Using Young's inequality,
\[
2\langle x_k,p_k\rangle\le \delta\mu_k\|x_k\|^2+\frac{1}{\delta\mu_k}\|p_k\|^2,
\quad
2\mu_k\langle p_k,w_k\rangle\le \|p_k\|^2+\mu_k^2\|w_k\|^2,
\]
and \eqref{eq:pk-bound}, we get
\begin{equation}\label{eq:weighted-expansion}
\|x_{k+1}\|^2
\le (1+\delta\mu_k)\|x_k\|^2
+2\mu_k\langle x_k,w_k\rangle
+\Bigl(4\mu_k^2+\tfrac{\mu_k}{\delta}\Bigr)\|w_k\|^2
+\Bigl(2+\tfrac{1}{\delta\mu_k}\Bigr)\varepsilon_k.
\end{equation}
\noindent\textbf{Step 2. Dissipativity.}
Write $w_k=v_k+n_k$ with $v_k=\proj_{T_C(x_k)}w_k$. 
Then $\langle x_k,w_k\rangle=\langle x_k,v_k\rangle+\langle x_k,n_k\rangle$.
By Lemma~\ref{lem:A3-global},
$\langle x_k,v_k\rangle\le \widetilde M-\gamma\|x_k\|^2$.
Moreover, since \(T_C(x_k)\) is a closed convex cone, Moreau's decomposition yields
\[
 w_k=\proj_{T_C(x_k)}(w_k)+\proj_{N_C(x_k)}(w_k)=v_k+n_k,
\]
with \(n_k\in N_C(x_k)\), \(v_k\perp n_k\), and in particular \(\|n_k\|\le \|w_k\|\). As a consequence,
\[
2\mu_k\langle x_k,n_k\rangle
\le \eta\mu_k\|x_k\|^2+\tfrac{\mu_k}{\eta}\|w_k\|^2.
\]
Thus
\begin{equation}\label{eq:dissipative}
2\mu_k\langle x_k,w_k\rangle
\le 2\widetilde M\mu_k-(2\gamma-\eta)\mu_k\|x_k\|^2+\tfrac{\mu_k}{\eta}\|w_k\|^2.
\end{equation}
Substituting into \eqref{eq:weighted-expansion} yields
\begin{equation}\label{eq:main}
\|x_{k+1}\|^2
\le (1-c\mu_k)\|x_k\|^2
+2\widetilde M\mu_k
+\Bigl(4\mu_k^2+\Bigl(\tfrac1\delta+\tfrac1\eta\Bigr)\mu_k\Bigr)\|w_k\|^2
+\Bigl(2+\tfrac{1}{\delta\mu_k}\Bigr)\varepsilon_k.
\end{equation}
\noindent\textbf{Step 3. Rough bound and proof of \eqref{eq:boundedness}.} From~\eqref{eq:main}, with
\[
a_k:=\|x_k\|^2,\qquad 0\le k\le k_T,
\]
we have
\begin{equation}\label{eq:rough-start}
a_{k+1}
\le
(1-c\mu_k)a_k
+2\widetilde M\,\mu_k
+\Bigl(4\mu_k^2+\Bigl(\frac1\delta+\frac1\eta\Bigr)\mu_k\Bigr)\|w_k\|^2
+\Bigl(2+\frac1{\delta\mu_k}\Bigr)\varepsilon_k .
\end{equation}
By \eqref{A2},
\[
\|w_k\|\le a+b\|x_k\|,
\]
and hence
\[
\|w_k\|^2\le 2a^2+2b^2\|x_k\|^2=2a^2+2b^2 a_k .
\]
Substituting this into \eqref{eq:rough-start} yields
\begin{align*}
a_{k+1}
&\le
(1-c\mu_k)a_k
+2\widetilde M\,\mu_k 
+\Bigl(4\mu_k^2+\Bigl(\frac1\delta+\frac1\eta\Bigr)\mu_k\Bigr)(2a^2+2b^2 a_k)
+\Bigl(2+\frac1{\delta\mu_k}\Bigr)\varepsilon_k \\
&=
\Biggl(1-c\mu_k
+2b^2\Bigl(\frac1\delta+\frac1\eta\Bigr)\mu_k
+8b^2\mu_k^2\Biggr)a_k 
+\Biggl(2\widetilde M
+2a^2\Bigl(\frac1\delta+\frac1\eta\Bigr)\Biggr)\mu_k
+8a^2\mu_k^2
+\Bigl(2+\frac1{\delta\mu_k}\Bigr)\varepsilon_k .
\end{align*}
Now define
\[
q_T:=\sup_{0\le j<k_T}\frac{\varepsilon_j}{\mu_j^2}<\infty,
\qquad
\mu_T:=\max_{0\le j<k_T}\mu_j<\infty .
\]
Then, for every \(0\le k<k_T\),
\[
\varepsilon_k\le q_T\mu_k^2,
\qquad
\frac{\varepsilon_k}{\mu_k}\le q_T\mu_k,
\qquad
8b^2\mu_k^2 a_k\le 8b^2\mu_T\,\mu_k\,a_k .
\]
Therefore
\[
\Bigl(2+\frac1{\delta\mu_k}\Bigr)\varepsilon_k
\le
2q_T\mu_k^2+\frac{q_T}{\delta}\mu_k ,
\]
and so
\[
a_{k+1}
\le
\bigl(1+\Lambda_T\mu_k\bigr)a_k
+A_T\mu_k+B_T\mu_k^2 ,
\]
where
\[
\Lambda_T:=
-c+2b^2\Bigl(\frac1\delta+\frac1\eta\Bigr)+8b^2\mu_T,
\]
\[
A_T:=
2\widetilde M
+2a^2\Bigl(\frac1\delta+\frac1\eta\Bigr)
+\frac{q_T}{\delta},
\qquad
B_T:=8a^2+2q_T .
\]
Iterating this recursion, we obtain for every \(0\le k\le k_T\),
\[
a_k
\le
\exp\!\Bigl(\Lambda_T^+\sum_{i=0}^{k-1}\mu_i\Bigr)
\left(
a_0+\sum_{j=0}^{k-1}(A_T\mu_j+B_T\mu_j^2)
\right),
\]
where \(\Lambda_T^+:=\max\{\Lambda_T,0\}\). Since
\[
\sum_{i=0}^{k-1}\mu_i=t_k\le T,
\]
it follows that
\[
a_k
\le
e^{\Lambda_T^+T}
\left(
\|x_0\|^2+A_T\sum_{j=0}^{k-1}\mu_j+B_T\sum_{j=0}^{k-1}\mu_j^2
\right)
\le
e^{\Lambda_T^+T}
\left(
\|x_0\|^2+A_TT+B_T\sum_{j=0}^{k_T-1}\mu_j^2
\right).
\]
Hence, setting
\[
K(T):=
e^{\Lambda_T^+T}
\left(
\|x_0\|^2+A_TT+B_T\sum_{j=0}^{k_T-1}\mu_j^2
\right),
\]
we obtain
\[
\max_{0\le k\le k_T}\|x_k\|^2
=
\max_{0\le k\le k_T} a_k
\le
K(T),
\]
which proves \eqref{eq:boundedness}. In particular, with $R_T:=\sqrt{K(T)},$ we have
\[
\|x_k\|\le R_T,\qquad 0\le k\le k_T .
\]
\textbf{Step 4.} Final estimate. From \(\|x_k\|\le R_T\) and \eqref{A2}, we get
\[
\|w_k\|\le a+bR_T=:M_T .
\]
Returning to \eqref{eq:main}, we infer
\begin{align*}
\|x_{k+1}\|^2
&\le
(1-c\mu_k)\|x_k\|^2
+2\widetilde M\,\mu_k
+\Bigl(4\mu_k^2+\Bigl(\frac1\delta+\frac1\eta\Bigr)\mu_k\Bigr)M_T^2
+\Bigl(2+\frac1{\delta\mu_k}\Bigr)\varepsilon_k \\
&\le
(1-c\mu_k)\|x_k\|^2
+
\Biggl(
2\widetilde M
+\Bigl(\frac1\delta+\frac1\eta\Bigr)M_T^2
+\frac{q_T}{\delta}
\Biggr)\mu_k
+
\Bigl(4M_T^2+2q_T\Bigr)\mu_k^2 .
\end{align*}
Therefore \eqref{eq:disc_energy} holds with
\[
C_0(T):=
2\widetilde M
+\Bigl(\frac1\delta+\frac1\eta\Bigr)M_T^2
+\frac{q_T}{\delta},
\qquad
C_1(T):=
4M_T^2+2q_T .
\]
This completes the proof.
\end{proof}
\begin{remark}[Role in stability and convergence]\label{rem:stability-convergence}
Lemma~\ref{lem:disc_energy} provides a uniform bound on the iterates produced by the catching--up scheme on every finite horizon $[0,T]$.
Geometrically, the dissipativity condition ensures that the tangential component \(\proj_{T_C(x_k)} w_k\) induces a restoring effect that prevents the discrete trajectory from escaping to infinity.
Analytically, this boundedness replaces any compactness assumption on $C$ and yields uniform control of the discrete dynamics on finite horizons. This provides a purely dissipativity-based mechanism ensuring boundedness, without requiring any compactness of the constraint set.\\
The proof also highlights the role of the auxiliary parameters $\eta,\delta>0$, introduced through Young-type estimates, which balance the dissipative term and the remainder terms in the recursion. Any choice satisfying $\eta+\delta<2\gamma$
ensures boundedness, while the specific values affect only the constants in the estimate. Equivalently, one may fix $c\in(0,2\gamma)$ and impose $\eta+\delta=2\gamma-c$, so that $c$ measures the available dissipativity margin.
This energy control parallels the continuous Lyapunov estimate of
Proposition~\ref{prop:cont_energy} and shows that both discrete and continuous trajectories evolve within the same bounded region. It provides the fundamental stability framework required for the convergence analysis developed in the next sections.
\end{remark}
\section{Convergence of the Catching-Up Scheme}\label{sec:convergence}
Before proving the convergence of the catching--up scheme, we embed the
discrete iterates into continuous time.  
To this end, we construct the standard interpolants associated with the
predictor--projection updates.  
These interpolants rewrite the discrete dynamics in differential form and
decompose the discrete velocity into a forward component along \(F\) and a
normal component arising from the approximate projection onto \(C\).  
This representation is the key tool for passing to the limit and recovering the
differential inclusion \eqref{eq:F-formulation}.\\

\noindent \textbf{Standing notation.}
Fix \(T>0\). For a given discretization \((\mu_k)\), we use the associated time grid
\[
t_0:=0,\qquad 
t_k:=\sum_{i=0}^{k-1}\mu_i \ (k\ge1),\qquad
k_T:=\max\{k\in\mathbb N:\ t_k\le T\}.
\]
Moreover, in the sequel, we denote
\begin{equation}\label{eq:derived_bounds}
R_T:=\sqrt{K(T)}, \qquad
q_T:=\sup_{0\le k<k_T}\frac{\varepsilon_k}{\mu_k^2}, \qquad
M_T:=a+bR_T.
\end{equation}
where \(K(T)\) is the constant given by Lemma~\ref{lem:disc_energy}.
\begin{definition}[Interpolants and discrete derivatives]\label{def:interpolants}
Fix \(T>0\) and let the discrete times \((t_k)_{k\ge0}\) and the index \(k_T\) be defined as in Lemma~\ref{lem:disc_energy}.
Let \((x_k)\) be generated by the catching--up scheme
\[
y_{k+1}=x_k+\mu_k w_k,\qquad w_k\in F(x_k),\qquad
x_{k+1}\in \proj_C^{\varepsilon_k}(y_{k+1}).
\]
We associate with \((x_k)\) the following interpolants on \([0,t_{k_T+1}]\):

\begin{itemize}
\item \emph{Piecewise affine state interpolant} \(x_\mu:[0,t_{k_T+1}]\to\mathbb{R}^n\):
\[
x_\mu(t_k):=x_k,\qquad
x_\mu(t)=x_k+\frac{t-t_k}{\mu_k}(x_{k+1}-x_k),\quad t\in[t_k,t_{k+1}).
\]

\item \emph{Piecewise constant selection} \(w_\mu:[0,t_{k_T+1}]\to\mathbb{R}^n\):
\[
w_\mu(t):=w_k,\qquad t\in[t_k,t_{k+1}).
\]

\item \emph{Projection defect and discrete normal term.}
Set
\[
p_k:=x_{k+1}-y_{k+1},\qquad
v_k:=\frac{p_k}{\mu_k},\qquad
v_\mu(t):=v_k,\quad t\in[t_k,t_{k+1}).
\]
Since \(x_{k+1}\in \proj_C^{\varepsilon_k}(y_{k+1})\), it is an \(\dfrac{\varepsilon_k}{2}\)-minimizer of the convex function
\[
z\mapsto \frac12\|z-y_{k+1}\|^2+I_C(z).
\]
Therefore,
\[
\langle y_{k+1}-x_{k+1},\,z-x_{k+1}\rangle \le \frac{\varepsilon_k}{2}
\qquad \forall z\in C.
\]
Consequently,
\[
\langle v_k,\,z-x_{k+1}\rangle \le \frac{\varepsilon_k}{2\mu_k}
\qquad \forall z\in C,
\]
that is,
$v_k\in N_C^{\delta_k}(x_{k+1}),$ where $\delta_k:=\dfrac{\varepsilon_k}{2\mu_k}.$
\end{itemize}

With these definitions,
\[
\frac{x_{k+1}-x_k}{\mu_k}=w_k-v_k,\qquad
\dot x_\mu(t)=w_\mu(t)-v_\mu(t)\quad\text{for a.e. }t\in[t_k,t_{k+1}),
\]
and \(x_\mu\) is absolutely continuous on \([0,t_{k_T+1}]\) with \(x_\mu(0)=x_0\).
\end{definition}
The interpolants introduced in Definition~\ref{def:interpolants} allow us to view the
discrete catching--up scheme as a family of absolutely continuous trajectories whose
piecewise derivatives decompose into the discrete selections $w_\mu$ and the
approximate normal terms $v_\mu$.  
To analyze the limiting behavior of these trajectories, we first establish uniform
bounds on their size and regularity on finite horizons.  
The next lemma provides precisely these estimates.
\begin{lemma}[Boundedness and equicontinuity of the interpolants]\label{lem:bounded_equicontinuous}
Let the interpolants \(x_\mu(\cdot)\), \(w_\mu(\cdot)\), and \(v_\mu(\cdot)\) be defined as in
Definition~\ref{def:interpolants}, corresponding to the catching--up scheme of
Definition~\ref{def:scheme_energy}. Assume that \(F\) satisfies \eqref{A2}--\eqref{A3}.
Then, for every finite horizon \(T>0\), the following properties hold:
\begin{enumerate}
\item[\textnormal{(i)}] \textbf{Uniform boundedness.}
Let \(R_T\) be as defined in \eqref{eq:derived_bounds}.
Then
\[
\|x_\mu(t)\| \le R_T, \qquad \forall\, t \in [0,T].
\]
\item[\textnormal{(ii)}] \textbf{Equicontinuity.}
There exists a constant \(L_T>0\), independent of \(\mu\), such that
\[
\|x_\mu(t)-x_\mu(s)\| \le L_T |t-s|, \qquad \forall\, s,t \in [0,T].
\]
\end{enumerate}
In particular, the family \(\{x_\mu(\cdot)\}\) is uniformly bounded and equicontinuous on every finite interval \([0,T]\).
\end{lemma}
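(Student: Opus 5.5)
Both assertions come from the construction of \(x_\mu\): on each grid interval \([t_k,t_{k+1}]\) it is the affine segment joining \(x_k\) to \(x_{k+1}\). So (i) will follow from the pointwise bound \eqref{eq:boundedness} of Lemma~\ref{lem:disc_energy} and convexity of the closed ball \(\overline B(0;R_T)\). Statement (ii) will follow from a uniform bound on the slopes \(\|\dot x_\mu\|=\|w_k-v_k\|\), using \eqref{A2} and the projection-defect estimate \eqref{eq:pk-bound}.

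\textbf{Step 1 (boundedness).} Fix \(t\in[0,T]\) and let \(k\) be such that \(t\in[t_k,t_{k+1})\), so that \(k\le k_T\). Then
\[
x_\mu(t)=(1-\theta)x_k+\theta x_{k+1},\qquad \theta=\frac{t-t_k}{\mu_k}\in[0,1),
\]
and hence \(\|x_\mu(t)\|\le\max\{\|x_k\|,\|x_{k+1}\|\}\). For \(k<k_T\), both endpoints are covered by \eqref{eq:boundedness}, which gives \(\|x_\mu(t)\|\le R_T\).

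The delicate case is the last interval \([t_{k_T},T]\): there the segment ends at \(x_{k_T+1}\), which lies beyond the range of \eqref{eq:boundedness}. I would handle it by applying the recursion \eqref{eq:disc_energy} once more at \(k=k_T\), which is legitimate because \(\|x_{k_T}\|\le R_T\) already controls \(\|w_{k_T}\|\le M_T\). This yields \(\|x_{k_T+1}\|^2\le K(T)+C_0(T)\mu_{k_T}+C_1(T)\mu_{k_T}^2\). Equivalently, one can run Lemma~\ref{lem:disc_energy} on the slightly larger horizon \(T+\sup_k\mu_k\). In either case, the resulting constant (still denoted \(R_T\), and harmlessly enlarged) depends only on the data listed in Lemma~\ref{lem:disc_energy}.

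\textbf{Step 2 (Lipschitz bound).} For a.e.\ \(t\in(t_k,t_{k+1})\), Definition~\ref{def:interpolants} gives \(\dot x_\mu(t)=w_k-v_k\). From \eqref{A2} and Step 1 we get \(\|w_k\|\le a+b\|x_k\|\le M_T\). From \eqref{eq:pk-bound} and \(\varepsilon_k\le q_T\mu_k^2\) we get
\[
\|v_k\|=\frac{\|p_k\|}{\mu_k}\le\sqrt{\|w_k\|^2+\frac{\varepsilon_k}{\mu_k^2}}\le\sqrt{M_T^2+q_T}.
\]
Setting \(L_T:=M_T+\sqrt{M_T^2+q_T}\), the map \(x_\mu\) is absolutely continuous with \(\|\dot x_\mu\|\le L_T\) a.e. on \([0,T]\). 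Integrating, \(\|x_\mu(t)-x_\mu(s)\|\le\int_s^t\|\dot x_\mu\|\le L_T|t-s|\), which is (ii).

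\textbf{Main obstacle (uniformity in \(\mu\)).} I expect the main difficulty to be showing that \(R_T\) and \(L_T\) are genuinely independent of the discretization. The constants \(K(T)\) and \(q_T\) from Lemma~\ref{lem:disc_energy} depend on \(\sup_{k<k_T}\varepsilon_k/\mu_k^2\), on \(\max_k\mu_k\), and on \(\sum_{k<k_T}\mu_k^2\le T\max_k\mu_k\). I would therefore make explicit that the family of discretizations under consideration satisfies \(\sup\mu_k\le\bar\mu\) and \(\sup\varepsilon_k/\mu_k^2\le\bar q\), uniformly over the family; this holds in the refinement regime \(\mu_k\downarrow0\), \(\varepsilon_k/\mu_k^2\to0\). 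With \(q_T\le\bar q\) and \(\mu_T\le\bar\mu\), the constants \(\Lambda_T,A_T,B_T\) and hence \(K(T)\), \(M_T\), \(L_T\) are bounded independently of \(\mu\). Uniform boundedness and equicontinuity of \(\{x_\mu\}\) on \([0,T]\) then follow.
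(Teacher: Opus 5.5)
Your proposal is correct and follows the paper's proof. For (i) you use the convex-combination bound on each grid segment, and for (ii) the slope bound $\|\dot x_\mu\|\le\|w_k\|+\|v_k\|$ obtained from \eqref{A2} and \eqref{eq:pk-bound}; your constant $M_T+\sqrt{M_T^2+q_T}$ is a slightly sharper form of the paper's $2M_T+\sqrt{q_T}$.

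Your two additions are repairs rather than detours. First, the paper's case split ($t=t_{k_T}$, or $t\in[t_k,t_{k+1})$ with $k<k_T$) misses $t\in(t_{k_T},T]$. There $x_{k_T+1}$, and the index $k_T$ in $q_T$, are not covered by Lemma~\ref{lem:disc_energy}, so your one-extra-step argument with a slightly enlarged $R_T$ is needed. Second, the paper's claim that $K(T)$ and $L_T$ are independent of $\mu$ requires bounds $\sup\mu_k\le\bar\mu$ and $\sup\varepsilon_k/\mu_k^2\le\bar q$ that hold uniformly over the family of discretizations. This is an extra hypothesis: contrary to your parenthetical, it does not follow from the per-sequence conditions $\mu_k\downarrow0$, $\varepsilon_k/\mu_k^2\to0$.
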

\begin{proof}
Fix a finite horizon \(T>0\).\\
\textbf{(i) Uniform boundedness.}
By Lemma~\ref{lem:disc_energy}, \(\max_{0\le k\le k_T}\|x_k\|^2\le K(T)\), where \(K(T)\) is independent of \(\mu\). 
Let \(t\in[0,T]\). If \(t=t_{k_T}\), then \(\|x_\mu(t)\|=\|x_{k_T}\|\le R_T\). Otherwise, there exists \(k\in\{0,\dots,k_T-1\}\) such that \(t\in[t_k,t_{k+1})\). By Definition~\ref{def:interpolants},
\[
x_\mu(t)=x_k+\frac{t-t_k}{\mu_k}(x_{k+1}-x_k)=(1-\theta)x_k+\theta x_{k+1},
\qquad \theta:=\frac{t-t_k}{\mu_k}\in[0,1].
\]
Hence
\(
\|x_\mu(t)\|\le (1-\theta)\|x_k\|+\theta\|x_{k+1}\|
\le \max\{\|x_k\|,\|x_{k+1}\|\}\le R_T.
\)
Therefore, \(\|x_\mu(t)\|\le R_T\) for all \(t\in[0,T]\).\\
\textbf{(ii) Equicontinuity.}
Since \(q_T<\infty\), as defined in \eqref{eq:derived_bounds}, on each interval \([t_k,t_{k+1})\), the interpolant \(x_\mu\) is affine and satisfies
\[
\dot x_\mu(t)=w_k-v_k \quad \text{for a.e. } t\in[t_k,t_{k+1}),
\]
where \(w_k\in F(x_k)\), \(v_k=p_k/\mu_k\), and \(p_k=x_{k+1}-y_{k+1}\). Since \(\|x_k\|\le R_T\), assumption~\eqref{A2} yields
\[
\|w_k\|\le M_T.
\]
where \(M_T\) is defined in~\eqref{eq:derived_bounds}. Moreover, from \(x_{k+1}\in \proj_C^{\varepsilon_k}(y_{k+1})\), we have \(\|p_k\|^2\le \mu_k^2\|w_k\|^2+\varepsilon_k\), and therefore, since $\frac{\sqrt{\varepsilon_k}}{\mu_k}\le \sqrt{q_T},$
\[
\|v_k\|=\frac{\|p_k\|}{\mu_k}\le \|w_k\|+\frac{\sqrt{\varepsilon_k}}{\mu_k}\le M_T+\sqrt{q_T}.
\]
Consequently, for a.e. \(t\in[0,T]\),
\[
\|\dot x_\mu(t)\|\le \|w_k\|+\|v_k\|\le 2M_T+\sqrt{q_T}=:L_T.
\]
Thus \(x_\mu\) is uniformly Lipschitz on \([0,T]\), and for any \(0\le s<t\le T\),
\[
\|x_\mu(t)-x_\mu(s)\|\le \int_s^t\|\dot x_\mu(\tau)\|\,d\tau\le L_T|t-s|.
\]
This proves equicontinuity.
\end{proof}
The uniform boundedness and equicontinuity established in
Lemma~\ref{lem:bounded_equicontinuous} imply, by the Arzelà--Ascoli theorem, that the family \(\{x_\mu(\cdot)\}\) is relatively compact in
\(C([0,T];\mathbb{R}^n)\). In particular, every sequence admits a subsequence that converges uniformly on \([0,T]\).
Moreover, the associated selections \(w_\mu\) and \(v_\mu\) are uniformly bounded
in \(L^\infty([0,T];\mathbb{R}^n)\). Hence, by the Banach--Alaoglu theorem, they admit weak--\(^\ast\) convergent subsequences in \(L^\infty([0,T];\mathbb{R}^n)\). The next result uses these compactness properties to pass to the limit in the
identity \(\dot{x}_\mu = w_\mu - v_\mu\) and to identify the limiting objects as a solution of the differential inclusion~\eqref{eq:F-formulation}.
\begin{theorem}[Convergence to solutions]
\label{thm:limit_solution}
Assume \eqref{A2}--\eqref{A3}. Let \(x_\mu\), \(w_\mu\), and \(v_\mu\) be the interpolants generated by the catching--up scheme (Definition~\ref{def:interpolants}), with initial condition \(x_\mu(0)=x_0\in C\).
Fix \(T>0\) and define
\[
\|\mu\|_T:=\max_{0\le k\le k_T}\mu_k.
\]
Then the following properties hold:
\begin{enumerate}
\item[\textnormal{(i)}] \textbf{Relative compactness.}
The family \(\{x_\mu\}\) is relatively compact in \(C([0,T];\mathbb{R}^n)\).

\item[\textnormal{(ii)}] \textbf{Convergence to a solution.}
For every sequence of discretizations such that \(\|\mu_j\|_T\to0\), there exist a subsequence (not relabeled), a function \(x\in AC([0,T];\mathbb{R}^n)\), and functions \(w,v\in L^\infty(0,T;\mathbb{R}^n)\) such that
\[
x_{\mu_j}\to x \quad \text{uniformly on }[0,T],
\]
\[
w_{\mu_j}\rightharpoonup^\ast w,\qquad
v_{\mu_j}\rightharpoonup^\ast v
\quad \text{in }L^\infty(0,T;\mathbb{R}^n),
\]
and, for a.e. \(t\in[0,T]\),
\[
\dot x(t)=w(t)-v(t),\qquad
w(t)\in F(x(t)),\qquad
v(t)\in N_C(x(t)),
\]
with \(x(0)=x_0\).
\end{enumerate}

Consequently, \(x(\cdot)\) is a solution of the differential inclusion
\[
\dot x(t)\in F(x(t))-N_C(x(t))
\quad \text{for a.e. } t\in[0,T].
\]
\end{theorem}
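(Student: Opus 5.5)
Part (i) is immediate from Lemma~\ref{lem:bounded_equicontinuous}: the interpolants are uniformly bounded by \(R_T\) and uniformly \(L_T\)-Lipschitz on \([0,T]\), so Arzel\`a--Ascoli gives relative compactness in \(C([0,T];\R^n)\). One point needs checking. The constants \(K(T)\), \(q_T\), \(M_T\) must be uniform along the sequence of discretizations \(\mu_j\). I will assume, as is implicit in the statement, that \(\sup_j q_T(\mu_j)<\infty\), which follows for instance from \(\varepsilon_k\le q\mu_k^2\) with a common \(q\). The sums \(\sum_{k<k_T}\mu_k^2\) are then bounded by \(T\|\mu\|_T\). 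For (ii), fix such a sequence. After extracting subsequences, \(x_{\mu_j}\to x\) uniformly, and \(w_{\mu_j}\rightharpoonup^\ast w\), \(v_{\mu_j}\rightharpoonup^\ast v\) in \(L^\infty\) by Banach--Alaoglu, using \(\|w_k\|\le M_T\) and \(\|v_k\|\le M_T+\sqrt{q_T}\). Passing to the limit in \(x_{\mu_j}(t)=x_0+\int_0^t(w_{\mu_j}-v_{\mu_j})\) gives \(x(t)=x_0+\int_0^t(w-v)\). Hence \(x\) is Lipschitz, \(\dot x=w-v\) a.e., and \(x(0)=x_0\). Moreover \(x(t)\in C\), because \(x_{\mu_j}(t)\) is a convex combination of points of \(C\) and \(C\) is closed.

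Next I identify \(w(t)\in F(x(t))\). Define the piecewise constant state \(\bar x_\mu(t):=x_k\) on \([t_k,t_{k+1})\). Then \(\|\bar x_\mu-x_\mu\|_\infty\le L_T\|\mu\|_T\to0\), so \(\bar x_{\mu_j}\to x\) uniformly. The map \(F=f-G\) has closed convex values and is upper semicontinuous: \(f\) is continuous by \eqref{A1}, and \(G=\clco A_0\) is upper semicontinuous and locally bounded by Proposition~\ref{prop:usc-locbnd}. Since \(w_{\mu_j}(t)\in F(\bar x_{\mu_j}(t))\), the standard convergence theorem for u.s.c.\ convex-valued maps (Aubin--Cellina) applies. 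Concretely, Mazur's lemma gives convex combinations of tails of \(w_{\mu_j}\) converging strongly, hence a.e., to \(w\). For a.e.\ \(t\) and each \(\varepsilon>0\), upper semicontinuity puts \(F(\bar x_{\mu_j}(t))\subset F(x(t))+\varepsilon\overline B\) for large \(j\). This set is convex and closed, so the limit \(w(t)\) lies in it, and letting \(\varepsilon\to0\) gives \(w(t)\in F(x(t))\).

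The main obstacle is \(v(t)\in N_C(x(t))\), since \(v_k\) lies only in an approximate normal cone at the shifted point \(x_{k+1}\). Define \(\hat x_\mu(t):=x_{k+1}\) on \([t_k,t_{k+1})\); it also converges uniformly to \(x\). From Definition~\ref{def:interpolants}, \(\langle v_{\mu}(t),z-\hat x_{\mu}(t)\rangle\le \delta_\mu(t):=\varepsilon_k/(2\mu_k)\le \tfrac{q_T}{2}\|\mu\|_T\) for all \(z\in C\). This bound tends to \(0\) uniformly. I will avoid pointwise arguments and test against nonnegative functions. Fix \(z\in C\) and a measurable set \(E\subset[0,T]\), and integrate over \(E\). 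The term \(\int_E\langle v_{\mu_j},z-\hat x_{\mu_j}\rangle\) splits as \(\int_E\langle v_{\mu_j},z-x\rangle\), which converges by weak-\(^\ast\) convergence, plus \(\int_E\langle v_{\mu_j},x-\hat x_{\mu_j}\rangle\), which tends to \(0\) because \(v\) is bounded and the convergence is uniform. This yields \(\int_E\langle v(t),z-x(t)\rangle\,dt\le0\). Since \(E\) is arbitrary, \(\langle v(t),z-x(t)\rangle\le0\) for a.e.\ \(t\). To handle all \(z\) at once, take \(z\) in a countable dense subset of \(C\) and use continuity in \(z\). This gives \(v(t)\in N_C(x(t))\) a.e., and therefore \(\dot x\in F(x)-N_C(x)\) a.e.
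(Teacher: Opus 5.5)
Your proposal is correct and follows the paper's proof essentially step for step. The shared steps are Arzel\`a--Ascoli; Banach--Alaoglu with passage to the limit in the integral identity; the shifted interpolant $x_{k+1}$ combined with the $\delta_k$-approximate normal cone, tested against nonnegative functions and then a countable dense subset of $C$; and a closure theorem for the u.s.c.\ compact convex-valued $F$, which you prove directly via Mazur where the paper goes through Dunford--Pettis. Two of your refinements add rigor the paper lacks. Using the left-endpoint state $x_k$ is correct, whereas the paper's $w_{\mu_j}(t)\in F(x_{\mu_j}(t))$ is not literally true. You also state that $q_T$ must be bounded uniformly along the sequence of discretizations, which the paper only uses implicitly when it calls $K(T)$ independent of $\mu$.
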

\begin{proof}
Fix \(T>0\).

\smallskip
\noindent\textbf{Step 1. Relative compactness of the state trajectories.}
By Lemma~\ref{lem:bounded_equicontinuous}, the family \(\{x_\mu\}\) is uniformly bounded and equicontinuous on \([0,T]\). Hence, by the Arzelà--Ascoli theorem, assertion \textnormal{(i)} holds, namely \(\{x_\mu\}\) is relatively compact in \(C([0,T];\mathbb{R}^n)\).\\
Now let \((\mu_j)\) be any sequence of discretizations such that \(\|\mu_j\|_T\to0\). By relative compactness, there exist a subsequence (not relabeled) and a function \(x\in C([0,T];\mathbb{R}^n)\) such that
\[
x_{\mu_j}\to x \qquad \text{uniformly on }[0,T].
\]
Moreover, the uniform Lipschitz estimate from Lemma~\ref{lem:bounded_equicontinuous} passes to the limit. Therefore \(x\in AC([0,T];\mathbb{R}^n)\).\\
\noindent\textbf{Step 2. Weak--\(\ast\) limits and identification of \(\dot x\).}
Again by Lemma~\ref{lem:bounded_equicontinuous}, the families \(\{w_\mu\}\) and \(\{v_\mu\}\) are uniformly bounded in \(L^\infty(0,T;\mathbb{R}^n)\). Hence, by the Banach--Alaoglu theorem, there exist \(w,v\in L^\infty(0,T;\mathbb{R}^n)\) such that, up to extraction of a further subsequence,
\[
w_{\mu_j}\rightharpoonup^\ast w,\qquad
v_{\mu_j}\rightharpoonup^\ast v
\quad \text{in }L^\infty(0,T;\mathbb{R}^n).
\]
By Definition~\ref{def:interpolants}, for every \(t\in[0,T]\),
\[
x_{\mu_j}(t)=x_0+\int_0^t\bigl(w_{\mu_j}(\tau)-v_{\mu_j}(\tau)\bigr)\,d\tau.
\]
Passing to the limit, using the uniform convergence of \(x_{\mu_j}\) and the weak--\(\ast\) convergences of \(w_{\mu_j}\) and \(v_{\mu_j}\), we obtain
\[
x(t)=x_0+\int_0^t \bigl(w(\tau)-v(\tau)\bigr)\,d\tau
\qquad \forall t\in[0,T].
\]
Consequently,
\[
\dot x(t)=w(t)-v(t)\qquad \text{for a.e. } t\in[0,T].
\]

\smallskip
\noindent\textbf{Step 3. Identification of the limits \(v(t)\) and \(w(t)\).}

\smallskip
\noindent\emph{(a) Identification of the normal component.}
Since \(q_T<\infty\) (see \eqref{eq:derived_bounds}), for each \(k<k_T\) we have $\varepsilon_k/\mu_k^2\le q_T.$
By Definition~\ref{def:interpolants},
$v_k\in N_C^{\delta_k}(x_{k+1})$
for $\delta_k:=\varepsilon_k/2\mu_k.$
That is,
\[
\langle v_k,\,z-x_{k+1}\rangle \le \delta_k
\qquad \forall z\in C.
\]
Define the shifted piecewise constant interpolant \(\bar x_\mu:[0,T]\to\mathbb{R}^n\) by
\[
\bar x_\mu(t):=x_{k+1}\qquad \text{for } t\in[t_k,t_{k+1}).
\]
Then, for a.e. \(t\in[0,T]\),
\[
\langle v_\mu(t),\,z-\bar x_\mu(t)\rangle \le \delta_\mu(t)
\qquad \forall z\in C,
\]
where
$\delta_\mu(t):=\varepsilon_k/2\mu_k$
for $t\in[t_k,t_{k+1}).$
Since \(\varepsilon_k/\mu_k^2\le q_T\), we have
\[
0\le \delta_\mu(t)\le \frac{q_T}{2}\,\|\mu\|_T
\qquad \text{for a.e. } t\in[0,T].
\]
Hence
\[
\|\delta_{\mu_j}\|_{L^\infty(0,T)}
\le \frac{q_T}{2}\,\|\mu_j\|_T \longrightarrow 0.
\]
We claim that
\[
\bar x_{\mu_j}\to x \qquad \text{uniformly on }[0,T].
\]
Indeed, if \(t\in[t_k,t_{k+1})\), then
\[
\|\bar x_{\mu_j}(t)-x_{\mu_j}(t)\|
=\|x_{k+1}-x_{\mu_j}(t)\|
\le \int_t^{t_{k+1}} \|\dot x_{\mu_j}(\tau)\|\,d\tau
\le L_T\,\mu_k
\le L_T\,\|\mu_j\|_T,
\]
where \(L_T\) is the uniform Lipschitz constant from Lemma~\ref{lem:bounded_equicontinuous}. Since \(x_{\mu_j}\to x\) uniformly on \([0,T]\), the claim follows.
Let \(z\in C\) and let \(\varphi\in L^1(0,T)\) be nonnegative. Then
\[
\int_0^T \varphi(t)\,\langle v_{\mu_j}(t),\,z-\bar x_{\mu_j}(t)\rangle\,dt
\le \|\varphi\|_{L^1}\,\|\delta_{\mu_j}\|_{L^\infty}
\longrightarrow 0.
\]
Passing to the limit, using \(v_{\mu_j}\rightharpoonup^\ast v\) in \(L^\infty(0,T;\mathbb{R}^n)\) and \(\bar x_{\mu_j}\to x\) uniformly on \([0,T]\), we obtain
\[
\int_0^T \varphi(t)\,\langle v(t),\,z-x(t)\rangle\,dt \le 0.
\]
Since this holds for every nonnegative \(\varphi\in L^1(0,T)\), it follows that
\[
\langle v(t),\,z-x(t)\rangle \le 0
\qquad \text{for a.e. } t\in[0,T].
\]
Choose a countable dense subset \(D\subset C\). There exists a null set \(N\subset[0,T]\) such that for every \(t\in[0,T]\setminus N\) and every \(z\in D\),
\[
\langle v(t),\,z-x(t)\rangle \le 0.
\]
By continuity in \(z\), the same inequality holds for every \(z\in C\). Therefore
\[
v(t)\in N_C(x(t))
\qquad \text{for a.e. } t\in[0,T].
\]

\smallskip
\noindent\emph{(b) Identification of the tangential component.}
By Proposition~\ref{prop:usc-locbnd}, the map \(G=\clco A_0\) is upper semicontinuous and locally bounded on \(C\). Moreover, by definition, each value \(G(x)=\clco A_0(x)\) is closed and convex. Since \(G\) is locally bounded, for every \(x\in C\) there exist \(r_x>0\) and \(\sigma_x>0\) such that
\[
\bigcup_{y\in C\cap \overline B(x,r_x)} G(y)\subset \overline B(0,\sigma_x).
\]
In particular, \(G(x)\subset \overline B(0,\sigma_x)\), so \(G(x)\) is bounded. As \(G(x)\) is also closed and we work in \(\mathbb{R}^n\), it follows that \(G(x)\) is compact. Hence \(G\) has nonempty compact convex values on \(C\). Because \(f\) is continuous, it follows that $F(x)=f(x)-G(x)$
is upper semicontinuous with nonempty compact convex values on \(C\). Let \(R_T\) be as defined in \eqref{eq:derived_bounds}. Since
\[
x_{\mu_j}(t)\in C\cap \overline B(0,R_T)
\qquad \forall t\in[0,T],\ \forall j,
\]
the sequence \((w_{\mu_j})\) is bounded in \(L^\infty(0,T;\mathbb{R}^n)\), hence also bounded and uniformly integrable in \(L^1(0,T;\mathbb{R}^n)\). By the Dunford--Pettis theorem, after extraction of a further subsequence if necessary, there exists \(\widetilde w\in L^1(0,T;\mathbb{R}^n)\) such that
\[
w_{\mu_j}\rightharpoonup \widetilde w
\qquad \text{weakly in }L^1(0,T;\mathbb{R}^n).
\]
On the other hand,
\[
x_{\mu_j}\to x \quad \text{uniformly on }[0,T],
\qquad
w_{\mu_j}(t)\in F(x_{\mu_j}(t))
\quad \text{for a.e. } t\in[0,T].
\]
Since \(F\) is upper semicontinuous with nonempty compact values on \(C\), a classical closure theorem for measurable selections of usc compact-valued multifunctions yields
\[
\widetilde w(t)\in F(x(t))
\qquad \text{for a.e. } t\in[0,T].
\]
It remains to identify \(\widetilde w\) with the weak--\(\ast\) limit \(w\). Let \(\psi\in L^\infty(0,T;\mathbb{R}^n)\). Since \(T<\infty\), we also have \(\psi\in L^1(0,T;\mathbb{R}^n)\). Therefore, by the weak--\(\ast\) convergence of \(w_{\mu_j}\) in \(L^\infty\),
\[
\int_0^T \langle w_{\mu_j}(t),\psi(t)\rangle\,dt
\longrightarrow
\int_0^T \langle w(t),\psi(t)\rangle\,dt,
\]
while by the weak convergence of \(w_{\mu_j}\) in \(L^1\),
\[
\int_0^T \langle w_{\mu_j}(t),\psi(t)\rangle\,dt
\longrightarrow
\int_0^T \langle \widetilde w(t),\psi(t)\rangle\,dt.
\]
Hence
\[
\int_0^T \langle w(t)-\widetilde w(t),\psi(t)\rangle\,dt=0
\qquad \forall \psi\in L^\infty(0,T;\mathbb{R}^n),
\]
which implies \(w=\widetilde w\) a.e. on \([0,T]\). Consequently,
\[
w(t)\in F(x(t))
\qquad \text{for a.e. } t\in[0,T].
\]

\smallskip
\noindent\textbf{Step 4. Conclusion.}
Combining Steps 2 and 3, we obtain for a.e. \(t\in[0,T]\),
\[
\dot x(t)=w(t)-v(t),\qquad
w(t)\in F(x(t)),\qquad
v(t)\in N_C(x(t)).
\]
Therefore
\[
\dot x(t)\in F(x(t))-N_C(x(t))
\qquad \text{for a.e. } t\in[0,T].
\]
Moreover, \(x(0)=x_0\). Hence \(x\) is a solution of the differential inclusion on \([0,T]\), which proves assertion \textnormal{(ii)}.
\end{proof}
The above convergence result holds only up to extraction of subsequences. In general, convergence of the whole family \((x_\mu)\) cannot be expected, since the limiting differential inclusion may admit multiple solutions; consequently, different subsequences may converge to different trajectories. Theorem~\ref{thm:limit_solution} ensures that every vanishing--mesh sequence of discrete trajectories admits a subsequence converging to a solution of the limiting differential inclusion.\\
If, in addition, \(f\) is locally Lipschitz on \(C\), the limit problem admits a unique solution. In this case, subsequence extraction is no longer needed and the entire family \((x_\mu)\) converges to that solution.
\begin{corollary}[Full-sequence convergence under Lipschitz perturbation]
\label{cor:full_convergence}
Under the assumptions of Theorem~\ref{thm:limit_solution}, assume in addition that 
\(f:C\to\mathbb{R}^n\) is locally Lipschitz on \(C\).
Then, for every \(x_0\in C\) and every \(T>0\), the differential inclusion
\textnormal{\eqref{eq:F-formulation}} admits a unique solution 
\(x\in AC([0,T];\mathbb{R}^n)\) with \(x(0)=x_0\).\\
Consequently, the interpolants \(x_\mu\) converge uniformly to \(x\) on \([0,T]\), that is,
\[
x_\mu \to x \quad \text{uniformly on } [0,T].
\]
\end{corollary}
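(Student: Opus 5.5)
The plan has two parts. First prove uniqueness for \eqref{eq:F-formulation} using monotonicity of $A$ together with the local Lipschitz continuity of $f$. Then upgrade the subsequential convergence of Theorem~\ref{thm:limit_solution} to convergence of the whole family by the usual subsequence-of-subsequences argument.

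\textbf{Uniqueness.} Let $x,y\in AC([0,T];\mathbb{R}^n)$ be two solutions of \eqref{eq:F-formulation} with $x(0)=y(0)=x_0$. For a.e.\ $t$ we have $\dot x(t)=f(x(t))-g_1(t)-n_1(t)$ with $g_1(t)\in G(x(t))$ and $n_1(t)\in N_C(x(t))$, and similarly for $y$ with $g_2,n_2$. By the decomposition \eqref{eq:decomp}, $\xi_1(t):=g_1(t)+n_1(t)\in G(x(t))+N_C(x(t))=A(x(t))$, and likewise $\xi_2(t)\in A(y(t))$.

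\begin{itemize}
\item \emph{Measurability.} For $\xi_1,\xi_2$ we do not need measurable selections separately. It suffices that $\xi_1=f(x)-\dot x$ and $\xi_2=f(y)-\dot y$, which are measurable.
\item \emph{A priori bound.} By Proposition~\ref{prop:cont_energy}(ii), both solutions satisfy $\|x(t)\|^2,\|y(t)\|^2\le \sup_{s\ge0}\beta(s)\le \max\{\|x_0\|^2,\widetilde M/\gamma\}=:R^2$. Hence both trajectories stay in the compact set $C\cap\overline B(0,R)$, on which $f$ is Lipschitz with some constant $L_R$.
\item \emph{Monotonicity estimate.} Using monotonicity of $A$, for a.e.\ $t$,
\[
\tfrac{d}{dt}\tfrac12\|x(t)-y(t)\|^2=\langle f(x)-f(y),x-y\rangle-\langle \xi_1-\xi_2,x-y\rangle\le L_R\|x(t)-y(t)\|^2 .
\]
\end{itemize}
Gronwall's lemma with zero initial difference gives $x\equiv y$ on $[0,T]$. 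Existence is already provided by Theorem~\ref{thm:limit_solution}(ii).

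\textbf{Full convergence.} Interpret $x_\mu\to x$ as convergence along every sequence of discretizations with $\|\mu_j\|_T\to0$. Suppose this fails. Then there exist $\epsilon>0$ and a sequence $(\mu_j)$ with $\|\mu_j\|_T\to0$ and $\sup_{[0,T]}\|x_{\mu_j}-x\|\ge\epsilon$ for all $j$. By Theorem~\ref{thm:limit_solution}, a subsequence converges uniformly to some solution $\tilde x$ of \eqref{eq:F-formulation} with $\tilde x(0)=x_0$. By uniqueness $\tilde x=x$, which contradicts the choice of $\epsilon$.

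\textbf{Main obstacle.} The only delicate point is the uniqueness step. One must check that $G+N_C$ really equals $A$ pointwise on $C$, so that the monotonicity of $A$ (rather than of the nonmonotone-looking $F$) can be used. One also needs a Lipschitz constant for $f$ that is uniform along both trajectories. The first point is exactly \eqref{eq:decomp}. The second follows from the global energy bound of Proposition~\ref{prop:cont_energy} combined with local Lipschitz continuity on the compact set $C\cap\overline B(0,R)$, via a finite covering argument.
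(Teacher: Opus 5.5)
Your proposal is correct and follows the paper's argument. The full-convergence step is exactly the paper's reasoning that every vanishing-mesh sequence has a subsequence converging to a solution, which must be the unique one; you merely phrase it more carefully as a contradiction argument.

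The one difference is in the uniqueness step. The paper takes uniqueness for granted (``by assumption'', i.e.\ from the classical theory of maximal monotone inclusions with Lipschitz perturbations). You instead prove it directly: you write $f(x)-\dot x\in G(x)+N_C(x)=A(x)$, use monotonicity of $A$, and apply Gr\"onwall with a Lipschitz constant of $f$ on a compact set containing both trajectories. For that compact set, continuity of the two solutions on $[0,T]$ would already suffice, without invoking Proposition~\ref{prop:cont_energy}.
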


\begin{proof}
By assumption, the differential inclusion \eqref{eq:F-formulation} admits a unique solution \(x\) on \([0,T]\) with \(x(0)=x_0\). On the other hand, by Theorem~\ref{thm:limit_solution}, every sequence of interpolants with vanishing mesh admits a subsequence converging uniformly on \([0,T]\) to a solution of \eqref{eq:F-formulation}. By uniqueness, every such subsequential limit must coincide with \(x\). Therefore the whole family \(x_\mu\) converges uniformly to \(x\) on \([0,T]\).
\end{proof}
\begin{remark}[On quantitative error estimates]
\label{rem:quati-error}
The convergence established above is qualitative. 
A quantitative estimate on finite horizons would require a one-step stability 
inequality with linear growth factor $1+c_T\mu_k$, together with a higher-order 
mesh-point consistency bound of order $O(\mu_k^2+\varepsilon_k)$.
In the present framework, the available local truncation estimate is only of 
order $O(\mu_k+\sqrt{\varepsilon_k})$, which does not allow the derivation of a global rate 
via a discrete Grönwall argument. This issue is related to the approximate 
projection model and is left for future investigation.
\end{remark}
\subsection{Global Existence and Uniqueness}\label{subsec:global-wellposed}

In this subsection, in addition to assumptions~\eqref{A1}--\eqref{A3}, we assume that \(f:C\to\R^n\) is locally Lipschitz on \(C\).
We conclude this section by extending the existence and uniqueness results obtained on finite horizons to the entire half-line \( [0,\infty) \). The dissipativity estimate of Proposition~\ref{prop:cont_energy} implies that every
solution of the differential inclusion remains uniformly bounded for all times. As a consequence, each trajectory evolves in a fixed bounded region of \(C\), and, since we work in \(\R^n\) and \(C\) is closed, this region is compact. Because \(f\) is locally Lipschitz on \(C\), it is Lipschitz on that compact set. This allows the finite-horizon uniqueness argument to be propagated globally.
The purpose of this subsection is to establish global existence, global uniqueness, and Lipschitz continuous dependence on the initial data. By Proposition~\ref{prop:cont_energy}, every solution \(x(\cdot)\) of
\eqref{eq:F-formulation} satisfies
\[
\|x(t)\|^2 \le \beta(t), \qquad t\ge0,
\]
where \(\beta\) is defined in \eqref{eq:beta-def}. Since
\[
\beta(t)=e^{-2\gamma t}\|x_0\|^2
+\frac{\widetilde M}{\gamma}\bigl(1-e^{-2\gamma t}\bigr),
\]
it follows that
\[
\beta(t)\le
\max\!\left\{\|x_0\|^2,\, \frac{\widetilde M}{\gamma}\right\}
\qquad \forall t\ge0.
\]
Therefore every trajectory remains in the invariant set
\[
\Omega_\infty := C \cap \overline{B}(0;R_\infty),
\qquad
R_\infty := \max\!\left\{\|x_0\|,\, \sqrt{\widetilde M/\gamma}\right\}.
\]
Since \(C\) is closed and we work in finite dimension, the set \(\Omega_\infty\) is compact.\\
Because \(f\) is locally Lipschitz on \(C\), it is Lipschitz on \(\Omega_\infty\).
Hence there exists a constant \(L_\infty>0\) such that
\[
\|f(x)-f(y)\|
\le
L_\infty\,\|x-y\|,
\qquad
\forall\, x,y\in \Omega_\infty.
\]
Since every solution remains in \(\Omega_\infty\) for all \(t\ge0\), this Lipschitz bound
holds uniformly along all trajectories on the whole interval \( [0,\infty) \).\\
The existence of this invariant compact region, together with the above uniform Lipschitz bound, allows us to propagate the finite-horizon existence and uniqueness result to the whole half-line. This leads to the following theorem.
\begin{theorem}[Global well-posedness]\label{thm:global-wellposed}
Assume \eqref{A1}--\eqref{A3}, and suppose that \(f:C\to\mathbb{R}^n\) is locally Lipschitz on \(C\).
Then, for every initial point \(x_0\in C\), the differential inclusion
\[
\dot x(t)\in F(x(t)) - N_C(x(t)), 
\qquad x(0)=x_0,
\]
admits a unique solution \(x\in AC_{\mathrm{loc}}([0,\infty);\mathbb{R}^n)\) satisfying
\(x(t)\in C\) for all \(t\ge0\).\\
Moreover (continuous dependence on initial data), let \(x_0^1,x_0^2\in C\), and let \(x_1,x_2\) be the corresponding solutions on \([0,\infty)\).
For \(i=1,2\), define
\[
R_\infty^i:=
\max\!\left\{\|x_0^i\|,\sqrt{\widetilde M/\gamma}\right\}.
\]
Set
\[
R_\infty:=\max\{R_\infty^1,R_\infty^2\},
\qquad
\Omega_\infty:= C\cap \overline B(0;R_\infty).
\]
Then there exists a constant \(L_\infty>0\), a Lipschitz constant of \(f\) on
\(\Omega_\infty\), such that
\[
\|x_1(t)-x_2(t)\|
\le
e^{L_\infty t}\,\|x_0^1-x_0^2\|,
\qquad \forall t\ge0.
\]
\end{theorem}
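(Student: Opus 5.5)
The plan has three parts: build a global solution from the finite-horizon existence result, prove uniqueness and the stability estimate by an energy argument that relies only on monotonicity of $A$, and then use uniqueness to glue the finite-horizon solutions into a single global one.

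\emph{Existence.} Fix $x_0\in C$. For every $T>0$, Theorem~\ref{thm:limit_solution} gives a solution $x^T\in AC([0,T];\R^n)$ of \eqref{eq:F-formulation} with $x^T(0)=x_0$. This needs only \eqref{A2}--\eqref{A3}, together with the usc of $G$ from Proposition~\ref{prop:usc-locbnd}. Since $x^T(t)$ is a limit of the interpolants $x_\mu(t)\in C$ (the segments joining consecutive iterates stay in $C$ by convexity) and $C$ is closed, we get $x^T(t)\in C$. Once uniqueness on finite horizons is established (step 2), the restriction of $x^{T'}$ to $[0,T]$ coincides with $x^T$ whenever $T<T'$. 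Setting $x(t):=x^T(t)$ for any $T\ge t$ then defines a function in $AC_{\mathrm{loc}}([0,\infty);\R^n)$ that solves the inclusion a.e.\ on $[0,\infty)$.

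\emph{Uniqueness and Lipschitz dependence.} Let $x_1,x_2$ be solutions with initial data $x_0^1,x_0^2$. By Proposition~\ref{prop:cont_energy}, together with the bound $\beta(t)\le\max\{\|x_0^i\|^2,\widetilde M/\gamma\}$, both trajectories stay in the compact set $\Omega_\infty$ for all $t\ge0$. There $f$ has Lipschitz constant $L_\infty$.

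For a.e.\ $t$ we write $\dot x_i(t)=f(x_i(t))-g_i(t)-n_i(t)$ with $g_i(t)\in G(x_i(t))$ and $n_i(t)\in N_C(x_i(t))$. By the decomposition \eqref{eq:decomp}, $g_i(t)+n_i(t)\in G(x_i(t))+N_C(x_i(t))=A(x_i(t))$. Monotonicity of $A$ then gives
\[
\langle (g_1+n_1)-(g_2+n_2),\,x_1-x_2\rangle\ge0 .
\]
Hence, for a.e.\ $t$,
\[
\tfrac12\tfrac{d}{dt}\|x_1(t)-x_2(t)\|^2\le\langle f(x_1(t))-f(x_2(t)),x_1(t)-x_2(t)\rangle\le L_\infty\|x_1(t)-x_2(t)\|^2 .
\]
Gronwall's lemma applied to $\|x_1-x_2\|^2$ gives $\|x_1(t)-x_2(t)\|^2\le e^{2L_\infty t}\|x_0^1-x_0^2\|^2$, which is exactly the stated estimate. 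Taking $x_0^1=x_0^2$ yields uniqueness on every $[0,T]$; this both justifies the gluing in step 1 and gives global uniqueness.

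\emph{Main obstacle.} The delicate point is the monotonicity inequality for $g_i+n_i$ when $x_i(t)$ lies in $C\setminus\dom A$, that is, on a part of the boundary that is not in the domain. I would handle it by invoking \eqref{eq:decomp} as an identity valid for all $x\in\R^n$ (\cite[Theorem~12.67]{Rock}). This makes the graph of $G+N_C$ a subset of the graph of $A$, hence monotone. If that identity were only available on $\dom A$, I would instead prove monotonicity of $G+N_C$ directly: first, $G=\clco A_0$ is monotone, since $A_0$ is a limit of graph points of $A$ on $E$ and monotonicity is preserved under limits and closed convex hulls; second, add the monotone operator $N_C$. Either route gives the required sign, and the rest of the argument is routine.
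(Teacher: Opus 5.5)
Your proof is correct, and it has the same architecture as the paper's. Both obtain finite-horizon solutions, confine them to the compact set $\Omega_\infty$ via Proposition~\ref{prop:cont_energy}, take a Lipschitz constant $L_\infty$ of $f$ on $\Omega_\infty$, and glue by uniqueness.

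The difference is where uniqueness and the exponential estimate come from.
\begin{itemize}
\item The paper takes finite-horizon uniqueness from Corollary~\ref{cor:full_convergence}, where it is imported from the classical theory rather than proved. It then cites ``the classical stability result'' for Lipschitz perturbations of maximal monotone operators.
\item You prove the estimate directly from $g_i+n_i\in A(x_i)$ and the monotonicity of $A$. Only then do you deduce uniqueness and the compatibility of the $x^T$.
\end{itemize}
Equivalently, you are checking that monotonicity of $G$, together with the Lipschitz bound on $f$, gives the one-sided Lipschitz condition \eqref{eq:OSL-FT} for $F=f-G$ on $\Omega_\infty$ with $\ell_T=L_\infty$. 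Your estimate is then exactly Theorem~\ref{thm:stability} with that constant. Your version is self-contained. The paper's is shorter but rests entirely on the citation.

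The obstacle you flag is also milder than you suggest.
\begin{itemize}
\item Since the graph of $A$ is closed, $A_0(x)\subseteq A(x)$, and hence $\clco A_0(x)\subseteq A(x)$, for every $x$.
\item Maximality gives $A(x)+N_C(x)\subseteq A(x)$. Indeed, $\langle n,x-y\rangle\ge0$ for every $n\in N_C(x)$ and every $y\in\dom A\subseteq C$.
\end{itemize}
So $G(x)+N_C(x)\subseteq A(x)$ holds for all $x$, without appealing to the full identity \eqref{eq:decomp}. In particular, $G(x)=\emptyset$ off $\dom A$.
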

\begin{proof}
Fix \(x_0\in C\). For every \(T>0\), the finite-horizon well-posedness result established above yields a unique solution
\[
x^T(\cdot)\in AC([0,T];\R^n)
\]
of \eqref{eq:F-formulation} on \([0,T]\) with \(x^T(0)=x_0\).
By Proposition~\ref{prop:cont_energy}, for every \(T>0\) and every \(t\in[0,T]\),
\[
\|x^T(t)\|^2
\le
e^{-2\gamma t}\|x_0\|^2
+\frac{\widetilde M}{\gamma}\bigl(1-e^{-2\gamma t}\bigr)
\le
\max\!\left\{\|x_0\|^2,\widetilde M/\gamma \right\}.
\]
Hence every finite-horizon solution remains in the compact set
$\Omega_\infty$ where $R_\infty:=\max\!\left\{\|x_0\|,\sqrt{\widetilde M/\gamma}\right\}.$\\
Let \(0<T_1<T_2\). Then both \(x^{T_1}(\cdot)\) and \(x^{T_2}(\cdot)|_{[0,T_1]}\) solve
\eqref{eq:F-formulation} on \([0,T_1]\) with the same initial data \(x_0\).
By uniqueness on finite horizons, they coincide on \([0,T_1]\). Therefore the family
\(\{x^T(\cdot)\}_{T>0}\) is compatible, and defines a unique function
\[
x(\cdot):[0,\infty)\to\R^n
\]
by setting \(x(t):=x^T(t)\) for any \(T>t\).
Then \(x(\cdot)\in AC_{\mathrm{loc}}([0,\infty);\R^n)\), \(x(t)\in C\) for all \(t\ge0\), and \(x\) solves \eqref{eq:F-formulation} on every finite interval, hence on \([0,\infty)\).\\
Now let \(x_0^1,x_0^2\in C\), and let \(x_1(\cdot),x_2(\cdot)\) be the corresponding solutions on \([0,\infty)\).
For \(i=1,2\), define
\[
R_\infty^i:=
\max\!\left\{\|x_0^i\|,\sqrt{\widetilde M/\gamma}\right\}.
\]
Set $R_\infty:=\max\{R_\infty^1,R_\infty^2\}$ and define the associated invariant set $\Omega_\infty$.
By Proposition~\ref{prop:cont_energy}, both trajectories remain in \(\Omega_\infty\) for all \(t\ge0\).
Since \(f\) is locally Lipschitz on \(C\), it is Lipschitz on \(\Omega_\infty\); let \(L_\infty\) be a Lipschitz constant of \(f\) on \(\Omega_\infty\).\\
The classical stability result for differential inclusions governed by a maximal monotone operator with a Lipschitz perturbation then yields
\[
\|x_1(t)-x_2(t)\|
\le
e^{L_\infty t}\|x_0^1-x_0^2\|,
\qquad \forall t\ge0.
\]
In particular, the global solution is unique.
\end{proof}
\section{Asymptotic Feasibility of the Predictor}\label{sec:feasibility}
Lemma~\ref{lem:disc_energy} was obtained through a detailed expansion of the update \(x_{k+1}\), in which the projection error
$p_k := x_{k+1}-y_{k+1}$ appears explicitly. While the discrete energy inequality~\eqref{eq:disc_energy} was used in Sections~\ref{sec:energy} and~\ref{sec:convergence} primarily to control the growth of the iterates, the estimates derived in its proof contain additional quantitative information on the predictor--corrector structure of the scheme.\\
In particular, the bounds on \(p_k\) provide a direct control of the distance between the predictor \(y_{k+1}\) and the constraint set \(C\). This allows us to quantify the feasibility defect of the predictor step. The goal of this section is to show that these feasibility defects vanish asymptotically even in the presence of inexact projections.
\subsection{Control of the projection error on finite horizons}\label{subsec:Cont_proj}
We begin by establishing a quantitative bound on the projection errors along any finite time interval.
\begin{lemma}[Control of the projection error on finite horizons]
\label{lem:proj-error}
Assume \eqref{A1}--\eqref{A3} and the step-size/error conditions of  Lemma~\ref{lem:disc_energy}. Recall the projection error  \(p_k := x_{k+1}-y_{k+1}\) introduced in Lemma~\ref{lem:disc_energy}.
For each finite horizon \(T>0\), let \(k_T\) be defined in the standing notation above.
Then there exists a constant \(C(T)>0\), depending only on \(T\), the data in \eqref{A1}--\eqref{A3}, the initial data \(x_0\), and the step-size/error sequences on \([0,T]\), such that
\begin{equation}
\sum_{k=0}^{k_T-1} \|p_k\|^2 \le C(T).
\end{equation}
In particular, the projection errors \(p_k\) are square--summable on every finite horizon \([0,T]\).
\end{lemma}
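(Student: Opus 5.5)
The plan is to combine the one-step projection estimate \eqref{eq:pk-bound} from the proof of Lemma~\ref{lem:disc_energy} with the uniform bound \eqref{eq:boundedness} on the iterates. There is no need to exploit any finer cancellation in the energy identity. The square-summability follows from the fact that each defect $\|p_k\|^2$ is of order $\mu_k^2$ on $[0,T]$, while $\sum_{k<k_T}\mu_k\le T$.

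\textbf{Step 1: one-step bound.} Since $x_k\in C$ is an admissible competitor in the definition of $\proj_C^{\varepsilon_k}(y_{k+1})$, we have $d_C(y_{k+1})\le\|y_{k+1}-x_k\|=\mu_k\|w_k\|$. Hence
\[
\|p_k\|^2=\|x_{k+1}-y_{k+1}\|^2\le d_C^2(y_{k+1})+\varepsilon_k\le \mu_k^2\|w_k\|^2+\varepsilon_k ,
\]
which is exactly \eqref{eq:pk-bound}.

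\textbf{Step 2: uniform control on $[0,T]$.} By Lemma~\ref{lem:disc_energy}, $\|x_k\|\le R_T$ for $0\le k\le k_T$. Assumption \eqref{A2} then gives $\|w_k\|\le a+bR_T=M_T$. With $q_T$ as in \eqref{eq:derived_bounds}, we also have $\varepsilon_k\le q_T\mu_k^2$ for $k<k_T$. Therefore
\[
\|p_k\|^2\le (M_T^2+q_T)\,\mu_k^2,\qquad 0\le k<k_T.
\]
The only point requiring attention is that $q_T$ is finite. This holds because it is a maximum over finitely many indices, and indeed $\varepsilon_k/\mu_k^2\to0$, so $q_T$ is even bounded independently of $T$.

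\textbf{Step 3: summation.} Using $\mu_k\le\mu_T:=\max_{j<k_T}\mu_j$ (or simply $\mu_k\le\mu_0$, since the step sizes are nonincreasing), we get
\[
\sum_{k=0}^{k_T-1}\|p_k\|^2\le (M_T^2+q_T)\,\mu_0\sum_{k=0}^{k_T-1}\mu_k=(M_T^2+q_T)\,\mu_0\,t_{k_T}\le (M_T^2+q_T)\,\mu_0\,T=:C(T).
\]
This constant depends only on the quantities listed in the statement.

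I do not expect a genuine obstacle here. The substantive work, namely the uniform bound on the iterates without compactness of $C$, has already been done in Lemma~\ref{lem:disc_energy}. The remaining care is bookkeeping: the index range must be $k<k_T$, so that both the bound on $x_k$ and the definition of $q_T$ apply. As a byproduct, one may also note the sharper estimate $\sum_{k<k_T}\|p_k\|^2\le (M_T^2+q_T)\|\mu\|_T\,T$, which tends to $0$ as the mesh is refined.
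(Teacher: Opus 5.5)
Your proof is correct and follows the paper's argument. You use the one-step bound $\|p_k\|^2\le \mu_k^2\|w_k\|^2+\varepsilon_k$, obtained by taking $x_k\in C$ as a competitor, then the iterate bound of Lemma~\ref{lem:disc_energy} together with \eqref{A2} to get $\|w_k\|\le M_T$, and finally sum over $k<k_T$. The paper simply notes that the finite sum $M_T^2\sum_{k<k_T}\mu_k^2+\sum_{k<k_T}\varepsilon_k$ is finite. You instead absorb $\varepsilon_k\le q_T\mu_k^2$ and use $\sum_{k<k_T}\mu_k\le T$, which gives the explicit constant $(M_T^2+q_T)\mu_0T$ and the sharper bound $(M_T^2+q_T)\|\mu\|_T\,T$. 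That sharper bound vanishes with the mesh only under the same implicit uniformity of $M_T$ and $q_T$ in $\mu$ that the paper assumes in Proposition~\ref{prop:L2-feasibility}.
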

\begin{proof}
Fix \(T>0\). By Lemma~\ref{lem:disc_energy},
\(\max_{0\le k\le k_T}\|x_k\|^2\le K(T)\), hence \(\|x_k\|\le R_T\) for all \(0\le k\le k_T\), where \(R_T\) is defined in \eqref{eq:derived_bounds}. By \eqref{A2}, it follows that
\[
\|w_k\|\le M_T,
\qquad 0\le k<k_T,
\]
where \(M_T\) is defined in \eqref{eq:derived_bounds}.
Using \eqref{eq:pk-bound}, we get
\[
\|p_k\|^2\le \mu_k^2\|w_k\|^2+\varepsilon_k
\le M_T^2\,\mu_k^2+\varepsilon_k,
\qquad 0\le k<k_T.
\]
Summing from \(k=0\) to \(k=k_T-1\), we obtain
\[
\sum_{k=0}^{k_T-1}\|p_k\|^2
\le
M_T^2\sum_{k=0}^{k_T-1}\mu_k^2+\sum_{k=0}^{k_T-1}\varepsilon_k.
\]
Since \(T\) is fixed, the right-hand side is finite. Therefore there exists a constant \(C(T)>0\), depending only on \(T\), the data in \eqref{A1}--\eqref{A3}, the initial data \(x_0\), and the step-size/error sequences on \([0,T]\), such that
\[
\sum_{k=0}^{k_T-1}\|p_k\|^2 \le C(T).
\]
This proves the claim.
\end{proof}
\begin{remark}
\label{rem:proj-feasibility}
Lemma~\ref{lem:proj-error} complements the discrete energy estimate of
Lemma~\ref{lem:disc_energy}. While Lemma~\ref{lem:disc_energy} ensures boundedness and stability of the iterates \((x_k)\), it does not provide quantitative information on the behavior of the predictor step
\(y_{k+1}=x_k+\mu_k w_k\).
Lemma~\ref{lem:proj-error} addresses this point by exploiting the projection structure of the scheme. It shows that the projection errors
\(p_k=x_{k+1}-y_{k+1}\) are square--summable on every finite horizon, that is, \(\sum\limits_{k=0}^{k_T-1}\|p_k\|^2\le C(T)\).
Equivalently, the predictors \(y_{k+1}\) remain, on average, close to the constraint set \(C\), even in the presence of inexact projections.
This estimate provides a quantitative control of the feasibility defect of the predictor step. In particular, it implies that the corrections induced by the projection become negligible in an averaged sense on finite horizons, which is consistent with the interpretation of the scheme as an approximation of the projected dynamics.\\
We emphasize, however, that the convergence analysis in
Section~\ref{sec:convergence} relies only on the boundedness and compactness properties derived from Lemma~\ref{lem:disc_energy}. The role of Lemma~\ref{lem:proj-error} is to make explicit the feasibility mechanism of the predictor--projection structure and to quantify how the scheme enforces the constraint \(x_k\in C\).
\end{remark}
\begin{proposition}[$L^2$-asymptotic feasibility of the predictor]
\label{prop:L2-feasibility}
Define the predictor interpolant
\[
\widehat y_\mu(t):= y_{k+1}, \qquad t\in[t_k,t_{k+1}),
\]
where \(y_{k+1}=x_k+\mu_k w_k\). Let \(x_\mu\) be the state interpolant of Definition~\ref{def:interpolants}.
Then, for every finite horizon \(T>0\),
\[
\int_0^T \|\widehat y_\mu(t)-x_\mu(t)\|^2\,dt \longrightarrow 0
\qquad\text{as }\|\mu\|_T\to 0.
\]
In particular,
\[
\int_0^T d_C^2\bigl(\widehat y_\mu(t)\bigr)\,dt \longrightarrow 0.
\]
Moreover, if \(x_\mu \to x\) uniformly on \([0,T]\), where \(x(\cdot)\) is the limit
trajectory given by Theorem~\ref{thm:limit_solution}, then
\[
\widehat y_\mu \to x \quad \text{in } L^2(0,T;\mathbb{R}^n).
\]
\end{proposition}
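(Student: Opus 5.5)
The plan is a direct pointwise estimate on each subinterval $[t_k,t_{k+1})$, followed by summation. Throughout, I use the uniform bounds of Lemma~\ref{lem:disc_energy} and Lemma~\ref{lem:bounded_equicontinuous}: $\|w_k\|\le M_T$, $\varepsilon_k\le q_T\mu_k^2$, and $\|\dot x_\mu\|\le L_T$.

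Fix $T>0$ and $t\in[t_k,t_{k+1})$ with $k\le k_T$. Write
\[
\widehat y_\mu(t)-x_\mu(t)=(y_{k+1}-x_k)-(x_\mu(t)-x_k)=\mu_k w_k-(x_\mu(t)-x_k).
\]
Then
\[
\|\widehat y_\mu(t)-x_\mu(t)\|\le \mu_k M_T+L_T(t-t_k)\le (M_T+L_T)\,\|\mu\|_T .
\]
Equivalently, one can use $y_{k+1}=x_{k+1}-p_k$ together with $\|p_k\|\le \mu_k(M_T+\sqrt{q_T})$ from \eqref{eq:pk-bound}. Squaring and integrating over $[0,T]$ gives
\[
\int_0^T\|\widehat y_\mu-x_\mu\|^2\,dt\le T\,(M_T+L_T)^2\|\mu\|_T^2\longrightarrow 0 .
\]
The last subinterval, which may stick out past $T$, is handled by the same bound because it is a single interval of length at most $\|\mu\|_T$. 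An alternative is to bound the integral by $\sum_k \mu_k\|\cdot\|^2$, apply Lemma~\ref{lem:proj-error}, and weight by $\mu_k\le\|\mu\|_T$. The pointwise route is cleaner.

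For the second claim, note that $x_\mu(t)\in C$, since it is a convex combination of $x_k,x_{k+1}\in C$ and $C$ is convex. Hence $d_C(\widehat y_\mu(t))\le \|\widehat y_\mu(t)-x_\mu(t)\|$, and the integral bound transfers directly. For the third claim, apply the triangle inequality in $L^2$:
\[
\|\widehat y_\mu-x\|_{L^2}\le \|\widehat y_\mu-x_\mu\|_{L^2}+\sqrt T\,\|x_\mu-x\|_\infty\to0 .
\]

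The one point needing care is that the constants $M_T$, $L_T$, $q_T$ must be uniform across the family of discretizations as $\|\mu\|_T\to0$. Otherwise the estimate is only per-discretization. I would make this explicit by assuming, as is implicit in Lemma~\ref{lem:bounded_equicontinuous} ("independent of $\mu$"), that $\sup\varepsilon_k/\mu_k^2$ is bounded uniformly over the family. Then $K(T)$, and hence $M_T$ and $L_T$, are uniform. I expect this uniformity bookkeeping, rather than any estimate, to be the main obstacle.
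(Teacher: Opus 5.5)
Your proof is correct, but it uses a different key step from the paper's. The paper writes $\widehat y_\mu(t)-x_\mu(t)=(1-\theta)\mu_k w_k-\theta p_k$ and bounds the square by $2\mu_k^2\|w_k\|^2+2\|p_k\|^2$. It then integrates and sums to get $2\sum_k\mu_k^3\|w_k\|^2+2\sum_k\mu_k\|p_k\|^2$, and invokes the square-summability of the projection defects from Lemma~\ref{lem:proj-error}, ending with $2M_T^2T\|\mu\|_T^2+2C(T)\|\mu\|_T$. This is essentially the alternative you mention and set aside.

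You instead use the uniform Lipschitz bound $L_T$ of Lemma~\ref{lem:bounded_equicontinuous} (equivalently, $\|p_k\|\le\mu_k(M_T+\sqrt{q_T})$). This gives the pointwise estimate $\sup_{t\in[0,T]}\|\widehat y_\mu(t)-x_\mu(t)\|\le(M_T+L_T)\|\mu\|_T$, which is more than the statement asks for:
\begin{itemize}
\item It bypasses Lemma~\ref{lem:proj-error} entirely.
\item It gives a visible $O(\|\mu\|_T^2)$ bound for the $L^2$ quantity. The paper's bound looks only $O(\|\mu\|_T)$ unless one notes that $C(T)=M_T^2\sum_k\mu_k^2+\sum_k\varepsilon_k$ is itself $O(\|\mu\|_T)$.
\item It yields $\sup_{t\in[0,T]}d_C(\widehat y_\mu(t))\to0$, i.e.\ uniform feasibility of the predictor in the general setting. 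The paper records this only for the one-dimensional Example~\ref{ex:1d-explicit}(vi), even though the argument there is not specific to dimension one. Corollaries~\ref{cor:Cesaro} and~\ref{cor:measure-predictor} then follow at once.
\end{itemize}
The paper's route, in exchange, keeps the projection defect $p_k$ in the foreground, which suits its narrative about the predictor--correction feasibility mechanism.

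Your side remarks are well placed:
\begin{itemize}
\item $x_\mu(t)\in C$ does rely on convexity of $C$; the paper asserts it without comment.
\item The piece $[t_{k_T},T]$ is silently dropped in the paper's sum over $k\le k_T-1$.
\item Uniformity of $q_T$, $M_T$, $L_T$ across the family of discretizations is exactly what the paper uses implicitly when it calls $L_T$ independent of $\mu$.
\end{itemize}
For the last subinterval you need $\varepsilon_{k_T}\le q\,\mu_{k_T}^2$. The paper's $q_T$, a supremum over $k<k_T$ only, does not cover this, but your uniform assumption does.
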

\begin{proof}
Fix \(T>0\). For \(t\in [t_k,t_{k+1})\), write
\[
\widehat y_\mu(t)=y_{k+1},
\qquad
x_\mu(t)=x_k+\theta\,(x_{k+1}-x_k),
\qquad
\theta:=\frac{t-t_k}{\mu_k}\in[0,1].
\]
Since \(y_{k+1}=x_k+\mu_k w_k\) and \(p_k=x_{k+1}-y_{k+1}\), we have
\(x_{k+1}-x_k=\mu_k w_k+p_k\), and therefore
\[
\widehat y_\mu(t)-x_\mu(t)
= y_{k+1}-x_k-\theta(x_{k+1}-x_k)
= (1-\theta)\mu_k w_k-\theta p_k.
\]
Hence
\[
\|\widehat y_\mu(t)-x_\mu(t)\|^2
\le 2\mu_k^2\|w_k\|^2+2\|p_k\|^2.
\]
By Lemma~\ref{lem:disc_energy}, \(\|x_k\|\le R_T\) for all \(0\le k\le k_T\), where \(R_T\) is defined in \eqref{eq:derived_bounds}. Using \eqref{A2}, we obtain
\[
\|w_k\|\le M_T,
\qquad 0\le k<k_T,
\]
where \(M_T\) is defined in \eqref{eq:derived_bounds}.
Integrating over \([t_k,t_{k+1})\) and summing from \(k=0\) to \(k=k_T-1\), we get
\begin{align*}
\int_0^T \|\widehat y_\mu(t)-x_\mu(t)\|^2\,dt
&\le 2\sum_{k=0}^{k_T-1}\mu_k^3\|w_k\|^2
   +2\sum_{k=0}^{k_T-1}\mu_k\|p_k\|^2 \\
&\le 2M_T^2\sum_{k=0}^{k_T-1}\mu_k^3
   +2\|\mu\|_T\sum_{k=0}^{k_T-1}\|p_k\|^2.
\end{align*}
Since \(\displaystyle\sum_{k=0}^{k_T-1}\mu_k=t_{k_T}\le T\), we have
\[
\sum_{k=0}^{k_T-1}\mu_k^3
\le \|\mu\|_T^2\sum_{k=0}^{k_T-1}\mu_k
\le T\|\mu\|_T^2.
\]
Moreover, by Lemma~\ref{lem:proj-error}, there exists \(C(T)>0\) such that
\[
\sum_{k=0}^{k_T-1}\|p_k\|^2\le C(T).
\]
Therefore
\[
\int_0^T \|\widehat y_\mu(t)-x_\mu(t)\|^2\,dt
\le 2M_T^2\,T\,\|\mu\|_T^2+2C(T)\|\mu\|_T
\longrightarrow 0
\qquad\text{as }\|\mu\|_T\to0.
\]
Since \(x_\mu(t)\in C\) for all \(t\in[0,T]\), it follows that
\[
d_C(\widehat y_\mu(t))
\le \|\widehat y_\mu(t)-x_\mu(t)\|,
\]
and hence
\[
\int_0^T d_C^2\bigl(\widehat y_\mu(t)\bigr)\,dt
\le
\int_0^T \|\widehat y_\mu(t)-x_\mu(t)\|^2\,dt
\longrightarrow 0.
\]
Finally, if \(x_\mu\to x\) uniformly on \([0,T]\), then
\[
\|\widehat y_\mu-x\|_{L^2(0,T)}
\le
\|\widehat y_\mu-x_\mu\|_{L^2(0,T)}
+\|x_\mu-x\|_{L^2(0,T)}.
\]
The first term tends to \(0\) by the first part of the proof, and the second tends to \(0\) by the uniform convergence of \(x_\mu\) on \([0,T]\). Therefore \(\widehat y_\mu\to x\) in \(L^2(0,T;\R^n)\).
\end{proof}
\begin{corollary}[Cesàro--type averaged feasibility]\label{cor:Cesaro}
For every \(T>0\),
\[
\frac{1}{T}\int_0^T d_C(\widehat y_\mu(t))\,dt \longrightarrow 0
\qquad\text{as }\|\mu\|_T\to 0.
\]
In particular, the predictor becomes feasible on average on \([0,T]\).
\end{corollary}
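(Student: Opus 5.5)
The plan is to derive the statement directly from Proposition~\ref{prop:L2-feasibility} by the Cauchy--Schwarz inequality on the finite interval $[0,T]$. Fix $T>0$. The function $t\mapsto d_C(\widehat y_\mu(t))$ is piecewise constant, hence measurable, and nonnegative, and it is bounded on $[0,T]$. Indeed, $d_C(\widehat y_\mu(t))\le \|\widehat y_\mu(t)-x_\mu(t)\|\le \mu_k\|w_k\|+\|p_k\|$, and both terms are controlled by Lemma~\ref{lem:disc_energy} and \eqref{eq:pk-bound}. So all integrals below are finite.

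The key step is the estimate
\[
\frac1T\int_0^T d_C(\widehat y_\mu(t))\,dt
\le \frac1T\Bigl(\int_0^T 1\,dt\Bigr)^{1/2}\Bigl(\int_0^T d_C^2(\widehat y_\mu(t))\,dt\Bigr)^{1/2}
=\Bigl(\frac1T\int_0^T d_C^2(\widehat y_\mu(t))\,dt\Bigr)^{1/2}.
\]
By Proposition~\ref{prop:L2-feasibility}, the integral on the right tends to $0$ as $\|\mu\|_T\to0$. Since $T$ is fixed, the left-hand side also tends to $0$. This is the claim.

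For an explicit rate, I would insert the bound obtained in the proof of Proposition~\ref{prop:L2-feasibility},
\[
\int_0^T d_C^2(\widehat y_\mu)\,dt\le 2M_T^2T\|\mu\|_T^2+2C(T)\|\mu\|_T .
\]
This gives an averaged feasibility defect of order $O(\|\mu\|_T^{1/2})$.

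No step is a genuine obstacle. The only point needing care is that the constants $M_T$ and $C(T)$ must stay bounded as the mesh is refined, which the paper already asserts in Lemma~\ref{lem:disc_energy} and Lemma~\ref{lem:proj-error}. The phrase ``feasible on average'' is then just a restatement of the limit.
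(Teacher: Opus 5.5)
Your proposal is correct and matches the paper's proof: Cauchy--Schwarz on $[0,T]$ gives $\frac1T\int_0^T d_C(\widehat y_\mu)\,dt\le T^{-1/2}\bigl(\int_0^T d_C^2(\widehat y_\mu)\,dt\bigr)^{1/2}$, and Proposition~\ref{prop:L2-feasibility} sends the right-hand side to $0$. Your $O(\|\mu\|_T^{1/2})$ rate is an extra the paper does not state, and it holds provided $M_T$ and $C(T)$ are uniform in the mesh, as you note.
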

\begin{proof}
Fix \(T>0\). By Proposition~\ref{prop:L2-feasibility},
\[
\int_0^T d_C^2\bigl(\widehat y_\mu(t)\bigr)\,dt \longrightarrow 0.
\]
By the Cauchy--Schwarz inequality,
\[
\frac{1}{T}\int_0^T d_C\bigl(\widehat y_\mu(t)\bigr)\,dt
\le
\frac{1}{\sqrt{T}}
\left(\int_0^T d_C^2\bigl(\widehat y_\mu(t)\bigr)\,dt\right)^{1/2}.
\]
The right-hand side tends to \(0\), which proves the claim. This also shows that \(d_C(\widehat y_\mu)\to 0\) in \(L^1(0,T)\).
\end{proof}
\begin{corollary}[Convergence in measure of the predictor]\label{cor:measure-predictor}
For every \(T>0\) and every \(\varepsilon>0\),
\[
\frac{1}{T}\,\operatorname{meas}\bigl\{t\in[0,T] : d_C(\widehat y_\mu(t))>\varepsilon\bigr\}
\longrightarrow 0
\qquad\text{as }\|\mu\|_T\to 0.
\]
Equivalently, \(d_C(\widehat y_\mu(t))\to 0\) in measure on \([0,T]\).
\end{corollary}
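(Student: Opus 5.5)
The plan is to deduce the statement from Proposition~\ref{prop:L2-feasibility} (or from Corollary~\ref{cor:Cesaro}) via Chebyshev's (Markov's) inequality. Fix \(T>0\) and \(\varepsilon>0\), and set
\[
E_\mu^\varepsilon:=\bigl\{t\in[0,T]:\ d_C(\widehat y_\mu(t))>\varepsilon\bigr\}.
\]
First we will check that \(E_\mu^\varepsilon\) is Lebesgue measurable. Since \(\widehat y_\mu\) is piecewise constant on the intervals \([t_k,t_{k+1})\), and \(d_C\) is \(1\)-Lipschitz hence continuous, the function \(t\mapsto d_C(\widehat y_\mu(t))\) is a step function. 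Therefore \(E_\mu^\varepsilon\) is a finite union of intervals intersected with \([0,T]\).

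Next we apply Chebyshev's inequality to the nonnegative function \(d_C^2(\widehat y_\mu(\cdot))\). Because \(d_C^2(\widehat y_\mu(t))>\varepsilon^2\) on \(E_\mu^\varepsilon\),
\[
\varepsilon^2\,\operatorname{meas}(E_\mu^\varepsilon)\le \int_{E_\mu^\varepsilon} d_C^2(\widehat y_\mu(t))\,dt\le \int_0^T d_C^2(\widehat y_\mu(t))\,dt .
\]
Dividing by \(T\varepsilon^2\) gives
\[
\frac1T\operatorname{meas}(E_\mu^\varepsilon)\le \frac{1}{T\varepsilon^2}\int_0^T d_C^2(\widehat y_\mu(t))\,dt .
\]
By Proposition~\ref{prop:L2-feasibility}, the right-hand side tends to \(0\) as \(\|\mu\|_T\to0\). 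The prefactor \(1/(T\varepsilon^2)\) is independent of \(\mu\), so the claim follows. One could equally use the \(L^1\) version: \(\operatorname{meas}(E_\mu^\varepsilon)\le \varepsilon^{-1}\int_0^T d_C(\widehat y_\mu)\,dt\), combined with Corollary~\ref{cor:Cesaro}.

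The equivalence with convergence in measure on \([0,T]\) is just the definition, because \(T\) is fixed and finite. I do not expect any genuine obstacle here. The only point needing care is measurability, which the step-function structure of \(\widehat y_\mu\) settles. A related subtlety is that \(\widehat y_\mu\) is defined on \([0,t_{k_T+1})\supseteq[0,T]\), so every quantity in the argument is well defined on \([0,T]\).
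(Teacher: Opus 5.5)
Your proposal is correct and essentially matches the paper's proof. The paper applies Markov's inequality to $d_C(\widehat y_\mu)$ and invokes Corollary~\ref{cor:Cesaro}, which is exactly your $L^1$ variant. Your primary Chebyshev argument on $d_C^2(\widehat y_\mu)$ goes through Proposition~\ref{prop:L2-feasibility} directly and is equally valid.
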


\begin{proof}
Fix \(T>0\) and \(\varepsilon>0\), and set
\[
E_\mu(\varepsilon)
:=\{t\in[0,T] : d_C(\widehat y_\mu(t))>\varepsilon\}.
\]
Then
\[
\int_0^T d_C(\widehat y_\mu(t))\,dt
\ge
\varepsilon\,\operatorname{meas}\bigl(E_\mu(\varepsilon)\bigr).
\]
Dividing by \(T\), we obtain
\[
\frac{1}{T}\operatorname{meas}(E_\mu(\varepsilon))
\le
\frac{1}{\varepsilon T}\int_0^T d_C(\widehat y_\mu(t))\,dt.
\]
By Corollary~\ref{cor:Cesaro}, the right-hand side tends to \(0\), which proves the claim.
\end{proof}
\begin{remark}[Interpretation of Cesàro and measure feasibility]
\label{rem:interpretation}
Corollaries~\ref{cor:Cesaro} and~\ref{cor:measure-predictor} show that, on any finite horizon \([0,T]\), the predictor \(\widehat y_\mu\) remains close to the constraint set \(C\) for most times as the mesh is refined. More precisely, for every \(\varepsilon>0\), the set $\{t\in[0,T] : d_C(\widehat y_\mu(t))>\varepsilon\}$
has vanishing normalized measure as \(\|\mu\|_T\to 0\).\\
Thus, although the predictor may leave \(C\) at certain times, the total time spent away from \(C\) becomes negligible when the discretization is sufficiently fine. This behavior reflects the predictor--projection structure of the scheme:
the feasibility defect generated at the prediction step is controlled by the projection error \(p_k\), whose square--summability on finite horizons was
established in Lemma~\ref{lem:proj-error}.
\end{remark}
\section{Quantitative Stability and Error Estimates}\label{sec:quantitative}

In the previous sections, we established the existence of solutions to the
differential inclusion \eqref{eq:F-formulation}, together with the convergence
of the catching--up trajectories on finite horizons. In the present section,
we refine this analysis by deriving quantitative estimates on finite intervals
and by establishing stability with respect to the initial data.\\
Fix \(T>0\). By Proposition~\ref{prop:cont_energy} and Lemma~\ref{lem:disc_energy},
both the continuous trajectories and the discrete iterates remain in bounded subsets of \(C\) on \([0,T]\). Hence there exists \(\rho_T>0\) such that \(x(t)\in \Omega_T\) for all \(t\in[0,T]\), and \(x_k\in \Omega_T\) for all \(k\) such that \(t_k\le T\), where
\[
\Omega_T:=C\cap \overline B(0;\rho_T).
\]
Similarly, for the discrete scheme, the iterates \(x_k\) remain in \(\Omega_T\)
for all indices \(k\) such that \(t_k\le T\), so that all estimates can be performed on the same bounded region.
On \(\Omega_T\), we impose a \emph{local one-sided Lipschitz} condition on \(F\), namely
\begin{equation}\label{eq:OSL-FT}
\langle x-\bar x,\; w-\bar w\rangle \le \ell_T\|x-\bar x\|^2
\end{equation}
for all \(x,\bar x\in \Omega_T\), \(w\in F(x)\), and \(\bar w\in F(\bar x)\).
This assumption is used only in the quantitative analysis below.\\
We first establish a stability estimate at the level of the corrected points, comparing the outputs of the projection step associated with nearby states. We then derive a local truncation estimate quantifying the deviation between the exact trajectory and a single corrected step. These two results provide the basic local ingredients for the analysis. Finally, we prove a finite-horizon stability estimate for the solutions of \eqref{eq:F-formulation} with respect to the initial data, based on the one-sided Lipschitz condition \eqref{eq:OSL-FT} and the monotonicity of the normal cone mapping \(N_C\).
\begin{lemma}[Corrector stability inequality]\label{lem:corrector-stability}
Assume \eqref{A2}--\eqref{A3}. Fix \(T>0\), and let \(\Omega_T\) be as above. Assume that \(F\) satisfies the local one-sided Lipschitz condition \eqref{eq:OSL-FT} on \(\Omega_T\).
Let \(\mu>0\), \(\varepsilon\ge 0\), and take \(x,\bar x\in \Omega_T\),
\(w\in F(x)\), \(\bar w\in F(\bar x)\). Define
\(y:=x+\mu w\), \(\bar y:=\bar x+\mu \bar w\), and let
\(u\in \proj_C^{\varepsilon}(y)\), \(\bar u\in \proj_C^{\varepsilon}(\bar y)\).\\
Then there exist constants \(c_T,C_T\ge 0\), depending only on \(T\), the local bounds of \(F\) on \(\Omega_T\), and the one-sided Lipschitz constant \(\ell_T\), such that
\begin{equation}\label{eq:corrector-stab}
\|u-\bar u\|^2
\le (2+c_T\mu)\|x-\bar x\|^2 + C_T(\mu^2+\varepsilon).
\end{equation}
\end{lemma}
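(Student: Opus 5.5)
Let $P:=\proj_C$ denote the exact metric projection onto the closed convex set $C$, and write
\[
u-\bar u=(u-P y)+(P y-P\bar y)+(P\bar y-\bar u).
\]
The plan is to split $\|u-\bar u\|^2$ into an exact-projection term and two approximate-projection defects. For the exact term we use nonexpansiveness of $P$ together with the one-sided Lipschitz condition \eqref{eq:OSL-FT}. For the defects we use the quadratic growth of $z\mapsto \|z-y\|^2$ around its minimizer on $C$.

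\textbf{Step 1 (defects of the approximate projection).} Since $u\in C$ and $\frac12\|\cdot-y\|^2+I_C$ is $1$-strongly convex with minimizer $Py$, we have
\[
\|u-Py\|^2\le \|u-y\|^2-d_C^2(y)\le \varepsilon .
\]
This is the standard variational inequality $\langle y-Py,u-Py\rangle\le0$ expanded. The same bound gives $\|\bar u-P\bar y\|^2\le\varepsilon$.

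\textbf{Step 2 (exact part).} By nonexpansiveness, $\|Py-P\bar y\|^2\le\|y-\bar y\|^2$. We then expand
\[
\|y-\bar y\|^2=\|x-\bar x\|^2+2\mu\langle x-\bar x,w-\bar w\rangle+\mu^2\|w-\bar w\|^2 .
\]
Condition \eqref{eq:OSL-FT} bounds the middle term by $2\mu\ell_T\|x-\bar x\|^2$. The last term is at most $4\mu^2 M^2$, where $M:=\sup\{\|w\|: w\in F(z),\ z\in\Omega_T\}\le a+b\rho_T$ by \eqref{A2}. Hence
\[
\|Py-P\bar y\|^2\le(1+2\ell_T^+\mu)\|x-\bar x\|^2+4M^2\mu^2 .
\]

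\textbf{Step 3 (assembly).} We combine the pieces with the elementary inequality $\|a+b\|^2\le 2\|a\|^2+2\|b\|^2$, taking $a=Py-P\bar y$ and $b=(u-Py)+(P\bar y-\bar u)$. This gives
\[
\|u-\bar u\|^2\le 2\|Py-P\bar y\|^2+2\bigl(\|u-Py\|+\|\bar u-P\bar y\|\bigr)^2\le 2\|Py-P\bar y\|^2+8\varepsilon .
\]
Substituting Step 2 yields \eqref{eq:corrector-stab} with $c_T:=4\ell_T^+$ and $C_T:=\max\{8M^2,8\}$. The factor $2$ in front of $\|x-\bar x\|^2$ in the statement is exactly what this crude splitting produces.

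\textbf{Main obstacle.} The delicate point is the cross term between the exact projection difference and the defects. A sharper splitting such as $(1+\mu)\|a\|^2+(1+1/\mu)\|b\|^2$ would keep the leading factor at $1$, but it produces $\varepsilon/\mu$ instead of $\varepsilon$, which is incompatible with the claimed form. I would therefore deliberately accept the factor $2$, which the statement permits. The other place needing care is the bound $\|u-Py\|^2\le\varepsilon$. I would justify it from the projection inequality $\langle y-Py,z-Py\rangle\le0$ for $z\in C$, which gives
\[
\|u-y\|^2=\|u-Py\|^2+\|Py-y\|^2-2\langle u-Py,y-Py\rangle\ge\|u-Py\|^2+d_C^2(y).
\]
Finally, the constants must depend only on $T$ through $\rho_T$, $\ell_T$ and the growth constants. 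This holds because all points involved lie in $\Omega_T$, where the growth bound \eqref{A2} gives the uniform bound $M$ on $F$.
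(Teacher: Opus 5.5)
Your proposal is correct and follows essentially the same route as the paper: exact projections with nonexpansiveness and the one-sided Lipschitz bound on $\|y-\bar y\|^2$, the variational-inequality estimate $\|u-\proj_C(y)\|^2\le\varepsilon$ for the approximate-projection defects, and the crude splitting $(a+b)^2\le 2a^2+2b^2$, which yields the same constants. Your use of $\ell_T^+$ in place of $\ell_T$ is a small improvement: it guarantees $c_T=4\ell_T^+\ge 0$, as the statement requires. The paper's choice $c_T=4\ell_T$ can be negative, for instance $\ell=-(a+1)$ in the one-dimensional example.
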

\begin{proof}
Set \(\alpha:=x-\bar x\) and \(d:=w-\bar w\). Then \(y-\bar y=\alpha+\mu d\), so
\[
\|y-\bar y\|^2
=\|\alpha+\mu d\|^2
=\|\alpha\|^2+2\mu\langle \alpha,d\rangle+\mu^2\|d\|^2.
\]
By \eqref{eq:OSL-FT}, \(\langle \alpha,d\rangle\le \ell_T\|x-\bar x\|^2\). Moreover, since
\(x,\bar x\in \Omega_T\subset C\cap \overline B(0;\rho_T)\), assumption~\eqref{A2} gives
\(\|w\|,\|\bar w\|\le a+b\rho_T=:m_T\). Hence \(\|d\|\le 2m_T\), and therefore
\begin{equation}\label{eq:predictor-gap-proof}
\|y-\bar y\|^2
\le
(1+2\ell_T\mu)\|x-\bar x\|^2+4m_T^2\mu^2.
\end{equation}
Let \(p\in \proj_C(y)\) and \(\bar p\in \proj_C(\bar y)\). Since \(C\) is closed and convex,
the metric projection is nonexpansive, so \(\|p-\bar p\|\le \|y-\bar y\|\).\\
We next compare the \(\varepsilon\)-projections with the exact projections. Since \(u\in \proj_C^\varepsilon(y)\), we have \(\|u-y\|^2\le \|p-y\|^2+\varepsilon\). On the other hand, the characterization of the metric projection yields
\(\langle y-p,z-p\rangle\le 0\) for all \(z\in C\), hence in particular
\(\langle y-p,u-p\rangle\le 0\). Expanding \(\|u-y\|^2\) around \(p\), we get
\[
\|u-y\|^2=\|u-p\|^2+\|p-y\|^2+2\langle u-p,p-y\rangle,
\]
and since \(\langle u-p,p-y\rangle=-\langle u-p,y-p\rangle\ge 0\), it follows that
\(\|u-y\|^2\ge \|u-p\|^2+\|p-y\|^2\). Therefore \(\|u-p\|^2\le \varepsilon\).
Similarly, \(\|\bar u-\bar p\|^2\le \varepsilon\). Thus
\[
\|u-\bar u\|
\le \|u-p\|+\|p-\bar p\|+\|\bar p-\bar u\|
\le \|y-\bar y\|+2\sqrt{\varepsilon}.
\]
Using \((a+b)^2\le 2a^2+2b^2\), we obtain
\[
\|u-\bar u\|^2\le 2\|y-\bar y\|^2+8\varepsilon.
\]
Combining this with \eqref{eq:predictor-gap-proof}, we infer
\[
\|u-\bar u\|^2
\le
2(1+2\ell_T\mu)\|x-\bar x\|^2+8m_T^2\mu^2+8\varepsilon.
\]
Hence \eqref{eq:corrector-stab} holds with \(c_T:=4\ell_T\) and
\(C_T:=\max\{8m_T^2,8\}\).
\end{proof}
We next estimate the deviation between the exact trajectory and a single
corrected step of the scheme. More precisely, starting from the exact point
\(x(t_k)\), we compare the value of the trajectory at time \(t_{k+1}\) with the
corresponding corrected point produced by one predictor--projection update. This yields a local truncation estimate, which quantifies the consistency of the discrete scheme with the differential inclusion and provides the second basic ingredient in the quantitative analysis.
\begin{proposition}[Local truncation estimate for one exact step]\label{prop:local-truncation}
Assume \eqref{A1}--\eqref{A3}. Fix \(T>0\), and let \(x:[0,T]\to C\) be a
solution of \eqref{eq:F-formulation}. Let \(t_k<t_{k+1}\le T\), choose
\(\bar w_k\in F(x(t_k))\), and set
\[
\bar y_{k+1}:=x(t_k)+\mu_k\bar w_k,\qquad
z_{k+1}\in \proj_C^{\varepsilon_k}(\bar y_{k+1}).
\]
Then there exists a constant \(C_T\ge 0\), depending only on \(T\), \(\rho_T\),
and the constants in \eqref{A1}--\eqref{A3}, such that
\begin{equation}\label{eq:local-truncation}
\|z_{k+1}-x(t_{k+1})\|
\le
C_T\bigl(\mu_k+\sqrt{\varepsilon_k}\bigr).
\end{equation}
Consequently,
\begin{equation}\label{eq:local-truncation-squared}
\|z_{k+1}-x(t_{k+1})\|^2
\le
C_T\bigl(\mu_k^2+\varepsilon_k\bigr).
\end{equation}
\end{proposition}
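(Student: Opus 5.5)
The estimate is obtained by a crude triangle inequality through the feasible point \(x(t_k)\), using only uniform boundedness of the relevant velocities on \(\Omega_T\). No one-sided Lipschitz condition and no fine consistency analysis are needed. That is consistent with Remark~\ref{rem:quati-error}: only order \(O(\mu_k+\sqrt{\varepsilon_k})\) is claimed. The splitting will be
\[
\|z_{k+1}-x(t_{k+1})\|\le \|z_{k+1}-x(t_k)\|+\|x(t_{k+1})-x(t_k)\|,
\]
and each term is bounded separately.

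\emph{First term (one scheme step from an exact point).} Since \(x(t_k)\in C\), the defining inequality of \(\proj_C^{\varepsilon_k}(\bar y_{k+1})\) can be tested with \(z=x(t_k)\), exactly as in \eqref{eq:pk-bound}. This gives \(\|z_{k+1}-\bar y_{k+1}\|^2\le d_C^2(\bar y_{k+1})+\varepsilon_k\le \mu_k^2\|\bar w_k\|^2+\varepsilon_k\). Hence
\[
\|z_{k+1}-x(t_k)\|\le \mu_k\|\bar w_k\|+\|z_{k+1}-\bar y_{k+1}\|\le 2\mu_k\|\bar w_k\|+\sqrt{\varepsilon_k}.
\]
By Proposition~\ref{prop:cont_energy}, \(x(t_k)\in\Omega_T\), so \eqref{A2} gives \(\|\bar w_k\|\le m_T:=a+b\rho_T\).

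\emph{Second term (Lipschitz bound on the exact trajectory).} Here I would use the projected-dynamics form \eqref{eq:proj-dynamics}: for a.e. \(t\), \(\dot x(t)\in\proj_{T_C(x(t))}F(x(t))\). Since the projection onto a closed convex cone is nonexpansive and fixes \(0\), it follows that \(\|\dot x(t)\|\le \sup_{u\in F(x(t))}\|u\|\le a+b\rho_T=m_T\), using again that \(x(t)\in\Omega_T\). Integrating over \([t_k,t_{k+1}]\) yields \(\|x(t_{k+1})-x(t_k)\|\le m_T\mu_k\).

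\emph{Conclusion.} Combining the two terms gives \(\|z_{k+1}-x(t_{k+1})\|\le 3m_T\mu_k+\sqrt{\varepsilon_k}\), so \eqref{eq:local-truncation} holds with \(C_T:=\max\{3m_T,1\}\). The squared version \eqref{eq:local-truncation-squared} then follows from \((\alpha+\beta)^2\le 2\alpha^2+2\beta^2\), after enlarging the constant to \(2C_T^2\) (the statement reuses the generic name \(C_T\)).

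The only delicate point is the velocity bound in the second step. It relies on the equivalence between \eqref{eq:F-formulation} and \eqref{eq:proj-dynamics} (Cornet's theorem), which is what controls the normal reaction. If one preferred to avoid that equivalence, one would need a separate bound on the selection \(v(t)\in N_C(x(t))\), which is not directly available. I therefore take the projected-dynamics representation as the cleanest route.
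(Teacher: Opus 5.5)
Your proof is correct and follows the paper's argument: the same triangle inequality through the feasible point $x(t_k)$, the same bound $\|z_{k+1}-x(t_k)\|\le 2m_T\mu_k+\sqrt{\varepsilon_k}$, a Lipschitz bound for the exact trajectory on $[t_k,t_{k+1}]$, and then $(\alpha+\beta)^2\le 2\alpha^2+2\beta^2$. The one difference is in the Lipschitz step: you derive it explicitly from \eqref{eq:proj-dynamics} and the nonexpansiveness of the projection onto $T_C(x(t))$, which gives the constant $m_T$ and hence $C_T=\max\{3m_T,1\}$, whereas the paper only asserts that \eqref{A2} makes $x(\cdot)$ Lipschitz with some $\ell_T^x$ and sets $C_T=\max\{2m_T+\ell_T^x,1\}$.
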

\begin{proof}
By Proposition~\ref{prop:cont_energy}, the solution \(x(\cdot)\) remains in a bounded subset of \(C\) on \([0,T]\). Hence there exists \(\rho_T>0\) such that
\(x(t)\in C\cap \overline{B}(0;\rho_T)\) for all \(t\in[0,T]\).\\
Applying \eqref{A2} on \(C\cap \overline{B}(0;\rho_T)\), we obtain
\(\|\bar w_k\|\le a+b\rho_T=:m_T\). Therefore
\[
\|\bar y_{k+1}-x(t_k)\|
=
\mu_k\|\bar w_k\|
\le m_T\mu_k,
\]
and since \(x(t_k)\in C\), it follows that
\[
d_C(\bar y_{k+1})\le \|\bar y_{k+1}-x(t_k)\|\le m_T\mu_k.
\]
Because \(z_{k+1}\in \proj_C^{\varepsilon_k}(\bar y_{k+1})\), we have
\[
\|z_{k+1}-\bar y_{k+1}\|^2
\le d_C^2(\bar y_{k+1})+\varepsilon_k.
\]
Hence
\[
\|z_{k+1}-\bar y_{k+1}\|
\le d_C(\bar y_{k+1})+\sqrt{\varepsilon_k}
\le m_T\mu_k+\sqrt{\varepsilon_k}.
\]
By the triangle inequality,
\[
\|z_{k+1}-x(t_k)\|
\le
\|z_{k+1}-\bar y_{k+1}\|+\|\bar y_{k+1}-x(t_k)\|
\le 2m_T\mu_k+\sqrt{\varepsilon_k}.
\]
Next, since \(x(\cdot)\) solves \eqref{eq:F-formulation} and remains in a bounded subset of \(C\) on \([0,T]\), assumption \eqref{A2} implies that \(x(\cdot)\) is Lipschitz on \([0,T]\). Thus there exists \(\ell_T^x>0\) such that
\[
\|x(t)-x(s)\|\le \ell_T^x |t-s|
\qquad \forall\, s,t\in[0,T].
\]
In particular,
\[
\|x(t_{k+1})-x(t_k)\|\le \ell_T^x \mu_k.
\]
Combining the last two estimates, we obtain
\[
\|z_{k+1}-x(t_{k+1})\|
\le
\|z_{k+1}-x(t_k)\|+\|x(t_k)-x(t_{k+1})\|
\le
(2m_T+\ell_T^x)\mu_k+\sqrt{\varepsilon_k}.
\]
Setting \(C_T:=\max\{2m_T+\ell_T^x,1\}\), we get
\[
\|z_{k+1}-x(t_{k+1})\|
\le
C_T\bigl(\mu_k+\sqrt{\varepsilon_k}\bigr),
\]
which proves \eqref{eq:local-truncation}.
Squaring and using \((a+b)^2\le 2a^2+2b^2\), we obtain
\[
\|z_{k+1}-x(t_{k+1})\|^2
\le
2C_T^2\bigl(\mu_k^2+\varepsilon_k\bigr).
\]
After renaming the constant, this gives \eqref{eq:local-truncation-squared}.
\end{proof}
The previous two results provide complementary local estimates: the first controls the stability of the corrected discrete step, while the second quantifies the consistency of a single step of the continuous trajectory. Together, they characterize the local behavior of the scheme on finite horizons.\\
We now turn to the continuous dynamics and establish a stability estimate with respect to the initial data on finite horizons.
\begin{theorem}[Stability with respect to initial data]\label{thm:stability}
Assume \eqref{A1}--\eqref{A3} and fix \(T>0\). Let \(x_0^1,x_0^2\in C\), and let
\(x_1,x_2:[0,T]\to C\) be two solutions of \eqref{eq:F-formulation} with initial
conditions \(x_1(0)=x_0^1\) and \(x_2(0)=x_0^2\).
Assume that \(x_1(t),x_2(t)\in \Omega_T\) for all \(t\in[0,T]\), and that \(F\)
satisfies the local one-sided Lipschitz condition \eqref{eq:OSL-FT} on
\(\Omega_T\) with constant \(\ell_T\). Then
\begin{equation}\label{eq:stability}
\sup_{0\le t\le T}\|x_1(t)-x_2(t)\|
\le
e^{\ell_T T}\,\|x_0^1-x_0^2\|.
\end{equation}
In particular, there exists a constant \(C_T\ge 0\), depending only on \(T\) and
\(\ell_T\), such that
\[
\sup_{0\le t\le T}\|x_1(t)-x_2(t)\|
\le
C_T\,\|x_0^1-x_0^2\|.
\]
\end{theorem}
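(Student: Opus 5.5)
The plan is the standard monotonicity/Grönwall argument on the squared distance. Set $e(t):=x_1(t)-x_2(t)$, which is absolutely continuous on $[0,T]$. By the decomposed formulation \eqref{eq:F-formulation}, for a.e. $t$ we can write
\[
\dot x_i(t)=w_i(t)-v_i(t),\qquad w_i(t)\in F(x_i(t)),\quad v_i(t)\in N_C(x_i(t)),\quad i=1,2.
\]
A measurable choice of $w_i,v_i$ is not actually needed, since the argument is pointwise in $t$. At every $t$ where both $x_i$ are differentiable, the inclusion $\dot x_i(t)\in F(x_i(t))-N_C(x_i(t))$ provides such vectors, and we only use them at that fixed $t$.

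At such $t$ we compute
\[
\tfrac12\tfrac{d}{dt}\|e(t)\|^2=\langle e(t),w_1(t)-w_2(t)\rangle-\langle x_1(t)-x_2(t),v_1(t)-v_2(t)\rangle .
\]
We bound the two terms separately.
\begin{itemize}
\item The second term is nonnegative inside the minus sign, by monotonicity of $N_C$. Concretely, $\langle v_1,x_2-x_1\rangle\le0$ and $\langle v_2,x_1-x_2\rangle\le0$ because $x_1,x_2\in C$. Hence $-\langle x_1-x_2,v_1-v_2\rangle\le 0$.
\item For the first term, the assumption $x_1(t),x_2(t)\in\Omega_T$ allows the one-sided Lipschitz condition \eqref{eq:OSL-FT} to be applied. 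This gives $\langle e,w_1-w_2\rangle\le \ell_T\|e\|^2$.
\end{itemize}
Together these yield $\frac{d}{dt}\|e(t)\|^2\le 2\ell_T\|e(t)\|^2$ for a.e. $t\in[0,T]$.

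Next comes Grönwall. Since $\|e\|^2$ is absolutely continuous, $t\mapsto e^{-2\ell_T t}\|e(t)\|^2$ is absolutely continuous with a.e. nonpositive derivative, hence nonincreasing. This gives $\|e(t)\|^2\le e^{2\ell_T t}\|e(0)\|^2$. Taking square roots gives $\|e(t)\|\le e^{\ell_T t}\|x_0^1-x_0^2\|$. If $\ell_T\ge0$, taking the supremum over $t\in[0,T]$ gives \eqref{eq:stability}. If $\ell_T<0$, the bound is $\le\|x_0^1-x_0^2\|$, so we take $C_T:=e^{\max\{\ell_T,0\}T}$ in the last assertion. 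Since \eqref{eq:OSL-FT} also holds with any larger constant, we may assume $\ell_T\ge0$ without loss of generality, and then $C_T=e^{\ell_T T}$.

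No step is genuinely hard. The only point needing care is the chain rule $\frac{d}{dt}\|e\|^2=2\langle e,\dot e\rangle$ a.e. for absolutely continuous $e$, which is standard. The other point is that the OSL condition is used only at points of $\Omega_T$, which is exactly why the hypothesis $x_i(t)\in\Omega_T$ is imposed. No boundedness of $F$ beyond membership in $\Omega_T$ is required.
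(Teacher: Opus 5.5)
Your proof is correct and is essentially the paper's argument: write $\dot x_i=w_i-v_i$, bound the $F$-part by \eqref{eq:OSL-FT} and the normal part by monotonicity of $N_C$, then apply Gr\"onwall to $\|x_1-x_2\|^2$; choosing $w_i(t),v_i(t)$ pointwise rather than as measurable selections is a harmless simplification. Your attention to the sign of $\ell_T$ is more careful than the paper, whose last step $e^{\ell_T t}\le e^{\ell_T T}$ tacitly needs $\ell_T\ge 0$: for $\ell_T<0$ and $x_0^1\neq x_0^2$ the supremum bound \eqref{eq:stability} is false already at $t=0$ (e.g.\ in the paper's one-dimensional example, where $\ell=-(a+1)$), so what you actually prove there is the pointwise estimate $\|x_1(t)-x_2(t)\|\le e^{\ell_T t}\|x_0^1-x_0^2\|$, and your ``without loss of generality'' replacement of $\ell_T$ by $\max\{\ell_T,0\}$ changes the constant in \eqref{eq:stability} rather than proving it as stated.
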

\begin{proof}
Since \(x_1\) and \(x_2\) solve \eqref{eq:F-formulation}, there exist measurable
functions \(w_i,v_i:[0,T]\to\mathbb{R}^n\) such that for a.e. \(t\in[0,T]\),
\[
w_i(t)\in F(x_i(t)), \quad
v_i(t)\in N_C(x_i(t)), \quad
\dot x_i(t)=w_i(t)-v_i(t), \quad i=1,2.
\]
Set \(\eta(t):=x_1(t)-x_2(t)\). Then \(\eta\in AC([0,T];\mathbb{R}^n)\), and for a.e. \(t\in[0,T]\),
\[
\frac12\frac{d}{dt}\|\eta(t)\|^2
=
\langle \eta(t),\,w_1(t)-w_2(t)\rangle
-
\langle \eta(t),\,v_1(t)-v_2(t)\rangle.
\]
Since \(x_1(t),x_2(t)\in \Omega_T\) and \(F\) satisfies \eqref{eq:OSL-FT},
\[
\langle \eta(t),\,w_1(t)-w_2(t)\rangle
\le
\ell_T\|\eta(t)\|^2,
\]
while the monotonicity of \(N_C\) gives $\langle \eta(t),\,v_1(t)-v_2(t)\rangle \ge 0.$
Hence
\[
\frac{d}{dt}\|\eta(t)\|^2
\le
2\ell_T\|\eta(t)\|^2
\quad \text{a.e. on } [0,T].
\]
By Gr\"onwall's inequality,
\[
\|\eta(t)\|
\le
e^{\ell_T t}\|\eta(0)\|
\le
e^{\ell_T T}\|\eta(0)\|
\qquad \forall\, t\in[0,T].
\]
Since \(\eta(0)=x_0^1-x_0^2\), \eqref{eq:stability} follows, and the last statement holds with \(C_T:=e^{\ell_T T}\).
\end{proof}
\section{Applications}
\label{sec:applications}

We begin with a simple one--dimensional model whose purpose is to illustrate, in a completely explicit manner, how the abstract framework developed in Sections~\ref{sec:prelim}--\ref{sec:quantitative} applies in practice. Although elementary, this example is useful because the decomposition \eqref{eq:decomp}, the projected formulation \eqref{eq:proj-dynamics}, the discrete predictor--projection scheme of Definition~\ref{def:scheme_energy}, the dissipative mechanism of Proposition~\ref{prop:cont_energy}, the discrete energy estimate of Lemma~\ref{lem:disc_energy}, the feasibility results of Section~\ref{sec:feasibility}, and the stability result of Theorem~\ref{thm:stability} can all be verified explicitly.

\begin{example}[A fully explicit one--dimensional test case]
\label{ex:1d-explicit}
We consider the differential inclusion
\begin{equation*}
\dot x(t)\in f(x(t))-A(x(t)),
\end{equation*}
where \(A:\R\rightrightarrows\R\) is defined by
\begin{equation*}
A(x):=
\begin{cases}
\{x\}, & x>0,\\
(-\infty,0], & x=0,\\
\emptyset, & x<0,
\end{cases}
\end{equation*}
and the perturbation \(f:\R\to\R\) is given by \(f(x):=-ax+b\), where \(a>0\) and \(b\in\R\).

\noindent\textbf{(i) Operator, domain, and decomposition.}
We first identify the operator within the framework of Section~\ref{sec:prelim}. The domain of \(A\) is \(\dom A=[0,+\infty)\), hence \(\operatorname{int}(\dom A)=(0,+\infty)\), and therefore
\begin{equation*}
C:=\cl(\dom A)=[0,+\infty).
\end{equation*}
Since \(A\) is single--valued on \((0,+\infty)\), we take \(E:=(0,+\infty)=\operatorname{int}(\dom A)\). In this example, \(A_0(x)=x\) for all \(x\in E\). Moreover, \(A\) can be written as the subdifferential of the proper lower semicontinuous convex function
\begin{equation*}
\varphi(x):=\frac12 x^2+I_{[0,+\infty)}(x)
=
\begin{cases}
\frac12 x^2, & x\ge0,\\
+\infty, & x<0.
\end{cases}
\end{equation*}
Indeed, \(A=\partial\varphi\). Hence the general decomposition result \eqref{eq:decomp} applies. We define \(G:=\clco A_0\). In the present one--dimensional case, one has \(G(x)=\{x\}\) for all \(x\in C\). Indeed, if \(x>0\), this is immediate from the definition of \(A_0\). If \(x=0\), then by \eqref{eq:A0},
\begin{equation*}
A_0(0)=\{v:\exists\,x_k\in E,\ x_k\to0,\ A(x_k)\to v\}.
\end{equation*}
Since \(A(x_k)=x_k\to0\), it follows that \(A_0(0)=\{0\}\), and therefore \(G(0)=\{0\}\). On the other hand, the normal cone to \(C=[0,+\infty)\) is
\begin{equation*}
N_C(x)=
\begin{cases}
\{0\}, & x>0,\\
(-\infty,0], & x=0.
\end{cases}
\end{equation*}
Therefore,
$A(x)=G(x)+N_C(x)$ for $x\in C,$
which is exactly the decomposition \eqref{eq:decomp}.\\
\textbf{(ii) Verification of assumptions~\eqref{A1})--\eqref{A3}.}
In the notation of Section~\ref{sec:setting}, the effective field is
\begin{equation*}
F(x):=f(x)-G(x)=-(a+1)x+b,
\qquad x\in C.
\end{equation*}
We verify the standing assumptions of Section~\ref{sec:setting}.\\
\textbf{Assumption}~\eqref{A1}.
The map \(f\) is affine, hence continuous and globally Lipschitz on \(\R\), and in particular continuous and locally bounded on \(C\).\\
\textbf{Assumption}~\eqref{A2}.
For every \(x\in C\), one has \( |F(x)|\le (a+1)|x|+|b| \), so the linear growth condition \eqref{A2} holds with \(a_0:=|b|\) and \(b_0:=a+1\).\\
\textbf{Assumption}Assumption~\eqref{A3}.
Since \(C=[0,+\infty)\), one has
\[
T_C(x)=
\begin{cases}
\R, & x>0\\
[0,+\infty), & x=0
\end{cases}
\quad \text{thus} \quad
\proj_{T_C(x)}u=
\begin{cases}
u, & x>0\\
\max\{u,0\}, & x=0.
\end{cases}
\]
For \(x>0\),
\begin{equation*}
\bigl\langle x,\proj_{T_C(x)}F(x)\bigr\rangle
=
xF(x)
=
-(a+1)x^2+bx.
\end{equation*}
Using Young's inequality,
\begin{equation*}
bx\le \frac{a+1}{2}x^2+\frac{b^2}{2(a+1)},
\end{equation*}
we obtain
\begin{equation*}
xF(x)\le -\frac{a+1}{2}x^2+\frac{b^2}{2(a+1)}.
\end{equation*}
At \(x=0\), we trivially have \(\langle 0,\proj_{T_C(0)}F(0)\rangle=0\). Therefore \eqref{A3} holds globally on \(C\) with the explicit constants
\begin{equation}\label{eq:example-gamma-M}
\gamma=\frac{a+1}{2},
\qquad
M=\frac{b^2}{2(a+1)}.
\end{equation}
In particular, the globalized dissipativity estimate \eqref{eq:A3_global} of Lemma~\ref{lem:A3-global} is immediate in this model.\\
\textbf{(iii) Continuous projected dynamics and boundedness.}
By \eqref{eq:F-formulation}, the decomposed differential inclusion reads
\begin{equation*}
\dot x(t)\in F(x(t))-N_C(x(t)).
\end{equation*}
Since \(C\) is closed and convex, \eqref{eq:proj-dynamics} shows that the dynamics reduces to the single-valued projected field
\begin{equation*}
\dot x(t)=\proj_{T_C(x(t))}F(x(t))
\qquad \text{for a.e. }t\ge0.
\end{equation*}
Equivalently,
\begin{equation*}
\dot x(t)=
\begin{cases}
-(a+1)x(t)+b, & x(t)>0,\\
\max\{b,0\}, & x(t)=0,
\end{cases}
\qquad \text{for a.e. }t\ge0.
\end{equation*}
Let \(V(x):=\frac12 x^2\). Then Proposition~\ref{prop:cont_energy}, combined with \eqref{eq:example-gamma-M}, gives
\begin{equation*}
\dot V(x(t))\le M-2\gamma V(x(t))
\qquad \text{for a.e. }t\ge0,
\end{equation*}
and therefore
\begin{equation}\label{eq:example-continuous-bound}
x(t)^2
\le
e^{-(a+1)t}x_0^2+\frac{b^2}{(a+1)^2}\bigl(1-e^{-(a+1)t}\bigr)
\qquad \forall t\ge0.
\end{equation}
In particular,
\begin{equation}\label{eq:example-invariant-bound}
x(t)^2
\le
\max\left\{x_0^2,\frac{b^2}{(a+1)^2}\right\}
\qquad \forall t\ge0,
\end{equation}
so every trajectory remains in the invariant set
\begin{equation*}
\Omega_\infty:=C\cap \overline B(0;R_\infty),
\qquad
R_\infty:=\max\left\{|x_0|,\frac{|b|}{a+1}\right\}.
\end{equation*}
Since \(f\) is globally Lipschitz, Theorem~\ref{thm:global-wellposed} applies and yields a unique global solution \(x\in AC_{\mathrm{loc}}([0,\infty);\R)\) for every \(x_0\in C\).\\

\medskip
\noindent\textbf{Interpretation.}
The dynamics exhibits a simple threshold behavior governed by the constraint. On the interior $(0,+\infty)$, the system follows the linear dissipative ODE $\dot x = -(a+1)x + b$, while at the boundary $x=0$ the normal cone induces a reflection mechanism preventing violation of the constraint.
In particular, when $b \le 0$, the boundary is invariant and the trajectory remains at $x(t)=0$ once it reaches it. When $b>0$, the boundary becomes repelling and the dynamics immediately re-enters the interior.
This illustrates explicitly how the projection onto the tangent cone encodes state constraints in the dynamics and how the dissipative structure interacts with the geometry of $C$.\\
\noindent\textbf{(iv) Equilibria and asymptotic behavior.}
Equilibria of the projected dynamics satisfy
\[
0 \in F(x) - N_C(x).
\]
If $x>0$, then $N_C(x)=\{0\}$ and the condition reduces to $F(x)=0$, hence $x^* = \dfrac{b}{a+1}.$
This equilibrium is admissible if and only if $x^* \ge 0$, i.e., $b \ge 0$.\\
If $x=0$, then the equilibrium condition becomes $0 \in b - N_C(0)$, which is
equivalent to $b \le 0.$ Therefore:
\begin{itemize}
\item if $b>0$, the unique equilibrium is $x^* = \dfrac{b}{a+1} > 0$,
\item if $b \le 0$, the unique equilibrium is $x^* = 0$.
\end{itemize}
Moreover, the explicit form of the dynamics in~\textbf{(iii)} shows that every trajectory converges to the corresponding equilibrium as $t\to\infty$.\\
\textbf{(v) Catching--up scheme and explicit discrete energy inequality.}
We now specialize the catching--up scheme of Definition~\ref{def:scheme_energy}. Let \((\mu_k)_{k\ge0}\) and \((\varepsilon_k)_{k\ge0}\) satisfy the assumptions of Definition~\ref{def:scheme_energy}. Starting from \(x_0\in C\), define
\begin{equation*}
y_{k+1}=x_k+\mu_kF(x_k)
=
\bigl(1-(a+1)\mu_k\bigr)x_k+\mu_k b,
\qquad
x_{k+1}\in \proj_C^{\varepsilon_k}(y_{k+1}).
\end{equation*}
Set, as in Lemma~\ref{lem:disc_energy}, \(p_k:=x_{k+1}-y_{k+1}\). Then
\begin{equation*}
x_{k+1}=x_k+\mu_kF(x_k)+p_k,
\end{equation*}
and since \(x_k\in C\), the approximate projection inequality gives
\begin{equation}\label{eq:example-pk-bound}
|p_k|^2\le \mu_k^2|F(x_k)|^2+\varepsilon_k.
\end{equation}
Define \(a_k:=|x_k|^2\). Expanding \(a_{k+1}\), we obtain
\begin{align}
a_{k+1}
&=
|x_k+\mu_kF(x_k)+p_k|^2 \notag\\
&=
a_k+2\mu_k x_kF(x_k)+\mu_k^2|F(x_k)|^2
+2x_kp_k+2\mu_kF(x_k)p_k+|p_k|^2.
\label{eq:example-expand}
\end{align}
We now estimate the terms in \eqref{eq:example-expand} exactly as in the proof of Lemma~\ref{lem:disc_energy}. First, using the dissipativity computation above,
\begin{equation}\label{eq:example-dissipative-term}
x_kF(x_k)
=
-(a+1)x_k^2+bx_k
\le
-\frac{a+1}{2}x_k^2+\frac{b^2}{2(a+1)}.
\end{equation}
Hence
\begin{equation}\label{eq:example-dissipative-term2}
2\mu_k x_kF(x_k)
\le
-(a+1)\mu_k a_k+\frac{b^2}{a+1}\mu_k.
\end{equation}
Next, fix \(c\in(0,a+1)\), and choose \(\delta>0\) such that \(0<\delta<a+1-c\). Then Young's inequality gives
\begin{equation}\label{eq:example-cross1}
2x_kp_k
\le
\delta\mu_k a_k+\frac{1}{\delta\mu_k}|p_k|^2,
\end{equation}
and
\begin{equation}\label{eq:example-cross2}
2\mu_kF(x_k)p_k
\le
\mu_k^2|F(x_k)|^2+|p_k|^2.
\end{equation}
Substituting \eqref{eq:example-dissipative-term2}, \eqref{eq:example-cross1}, and \eqref{eq:example-cross2} into \eqref{eq:example-expand}, we obtain
\begin{align}
a_{k+1}
&\le
a_k-(a+1-\delta)\mu_k a_k+\frac{b^2}{a+1}\mu_k
+2\mu_k^2|F(x_k)|^2
+\left(2+\frac{1}{\delta\mu_k}\right)|p_k|^2.
\label{eq:example-expand2}
\end{align}
Using \eqref{eq:example-pk-bound}, this yields
\begin{align}
a_{k+1}
&\le
a_k-(a+1-\delta)\mu_k a_k+\frac{b^2}{a+1}\mu_k \notag\\
&\qquad
+\left(4\mu_k^2+\frac{\mu_k}{\delta}\right)|F(x_k)|^2
+\left(2+\frac{1}{\delta\mu_k}\right)\varepsilon_k.
\label{eq:example-expand3}
\end{align}
Now \( |F(x_k)|\le (a+1)|x_k|+|b| \), so
\begin{equation*}
|F(x_k)|^2\le 2(a+1)^2 a_k+2b^2.
\end{equation*}
Substituting this into \eqref{eq:example-expand3}, we get
\begin{align}
a_{k+1}
&\le
a_k-(a+1-\delta)\mu_k a_k+\frac{b^2}{a+1}\mu_k \notag\\
&\qquad
+\left(8(a+1)^2\mu_k^2+\frac{2(a+1)^2}{\delta}\mu_k\right)a_k
+\left(8b^2\mu_k^2+\frac{2b^2}{\delta}\mu_k\right)\notag\\
&\qquad
+\left(2+\frac{1}{\delta\mu_k}\right)\varepsilon_k.
\label{eq:example-expand4}
\end{align}
Fix a finite horizon \(T>0\), and let \(k_T\) be as in the standing notation of Sections~\ref{sec:energy} and~\ref{sec:convergence}. Since \(\varepsilon_k/\mu_k^2\to0\), the quantity
$q_T:=\sup_{0\le k<k_T}\dfrac{\varepsilon_k}{\mu_k^2}$
is finite. Hence
\begin{equation*}
\left(2+\frac{1}{\delta\mu_k}\right)\varepsilon_k
\le
2q_T\mu_k^2+\frac{q_T}{\delta}\mu_k.
\end{equation*}
Therefore, for \(0\le k<k_T\),
\begin{equation}
a_{k+1}
\le
\left(1+\lambda_T\mu_k+\Lambda_T\mu_k^2\right)a_k
+A_T\mu_k+B_T\mu_k^2,
\label{eq:example-rough-recursion}
\end{equation}
for suitable constants \(\lambda_T,\Lambda_T,A_T,B_T\ge0\) depending only on \(a\), \(b\), \(\delta\), and the step-size/error data on \([0,T]\). As in Step~3 of the proof of Lemma~\ref{lem:disc_energy}, this rough recursion implies the existence of a constant \(K(T)>0\) such that
\begin{equation}\label{eq:example-discrete-bound}
\max_{0\le k\le k_T}|x_k|^2\le K(T).
\end{equation}
In particular, setting \(R_T:=\sqrt{K(T)}\), we have \( |x_k|\le R_T \) for all \(0\le k\le k_T\). Returning to \eqref{eq:example-expand3}, we infer that
\begin{equation*}
|F(x_k)|\le (a+1)R_T+|b|=:M_T,
\qquad 0\le k<k_T.
\end{equation*}
Hence \eqref{eq:example-expand3} reduces to
\begin{equation*}
a_{k+1}
\le
a_k-c\mu_k a_k+C_0(T)\mu_k+C_1(T)\mu_k^2,
\qquad 0\le k<k_T,
\end{equation*}
for suitable constants \(C_0(T),C_1(T)\ge0\). Thus the discrete energy inequality of Lemma~\ref{lem:disc_energy} is recovered explicitly in this example. In particular, this shows that the discrete iterates remain uniformly bounded on every finite horizon and consistently approximate the continuous dynamics analyzed in \textbf{(iii)}--\textbf{(iv)}.\\
\textbf{(vi) Discrete normal reaction and predictor feasibility.}
Define, as in Definition~\ref{def:interpolants}, \(v_k:=p_k/\mu_k\). Then
$v_k\in N_C^{\delta_k}(x_{k+1})$ for
$\delta_k:= \varepsilon_k/2\mu_k,$
and the discrete velocity identity becomes
\begin{equation*}
\frac{x_{k+1}-x_k}{\mu_k}=F(x_k)-v_k.
\end{equation*}
This is exactly the one--dimensional counterpart of the general decomposition in Definition~\ref{def:interpolants}.\\
We next make explicit the feasibility mechanism studied in Section~\ref{sec:feasibility}. Since \(C=[0,+\infty)\), one has \(d_C(y)=(-y)^+:=\max\{-y,0\}\). Because \(x_{k+1}\in C\), we have \(d_C(y_{k+1})\le |y_{k+1}-x_{k+1}|=|p_k|\). Using \eqref{eq:example-pk-bound}, we obtain
\begin{equation}\label{eq:example-feasibility-direct}
d_C(y_{k+1})
\le
|p_k|
\le
\mu_k|F(x_k)|+\sqrt{\varepsilon_k}.
\end{equation}
Fix \(T>0\). By \eqref{eq:example-discrete-bound}, the sequence \((x_k)\) is bounded on \(\{0,\dots,k_T\}\), hence so is \((F(x_k))\). Thus there exists \(M_T>0\) such that \( |F(x_k)|\le M_T \) for all \(0\le k<k_T\). Therefore
\begin{equation*}
\sup_{0\le k<k_T} d_C(y_{k+1})
\le
M_T\|\mu\|_T+\sup_{0\le k<k_T}\sqrt{\varepsilon_k}.
\end{equation*}
Since \(\varepsilon_k/\mu_k^2\to0\), one has
\begin{equation*}
\sup_{0\le k<k_T}\sqrt{\varepsilon_k}
\le
\sqrt{q_T}\,\|\mu\|_T.
\end{equation*}
Hence
\begin{equation*}
\sup_{0\le k<k_T} d_C(y_{k+1})\longrightarrow0
\qquad\text{as }\|\mu\|_T\to0.
\end{equation*}
Thus, in this explicit model, the predictor becomes uniformly feasible on every finite horizon as the mesh tends to zero. This strengthens, in the present one--dimensional setting, the asymptotic feasibility mechanism established abstractly in Proposition~\ref{prop:L2-feasibility} and Corollaries~\ref{cor:Cesaro}--\ref{cor:measure-predictor}.\\
\textbf{(vii) One-sided Lipschitz property, stability, and convergence.}
The field \(F\) is affine, so for all \(x,\bar x\in C\),
\begin{equation*}
F(x)-F(\bar x)=-(a+1)(x-\bar x).
\end{equation*}
Hence
\begin{equation}\label{eq:example-OSL}
(x-\bar x)\bigl(F(x)-F(\bar x)\bigr)
=
-(a+1)|x-\bar x|^2.
\end{equation}
Therefore the one-sided Lipschitz condition \eqref{eq:OSL-FT} holds globally on \(C\) with the explicit constant \(\ell=-(a+1)\). Applying the finite-horizon stability estimate of Theorem~\ref{thm:stability} on any interval \([0,T]\), we conclude that for any two solutions \(x_1(\cdot)\) and \(x_2(\cdot)\) of \eqref{eq:F-formulation},
\begin{equation*}
|x_1(t)-x_2(t)|
\le
e^{-(a+1)t}|x_1(0)-x_2(0)|
\qquad \forall t\in[0,T].
\end{equation*}
Since \(T>0\) is arbitrary, the same estimate holds for all \(t\ge0\). Equivalently, one may view this as the concrete specialization of the stability statement in Theorem~\ref{thm:global-wellposed} to the present model. In particular, the dynamics is contractive.\\
Since \(f\) is globally Lipschitz on \(C\), the uniqueness hypothesis of Corollary~\ref{cor:full_convergence} is satisfied. Hence the interpolants of Definition~\ref{def:interpolants} converge uniformly on every finite time interval to the unique solution of the continuous problem. More precisely, if \(x_\mu\) denotes the piecewise affine interpolant associated with the catching--up scheme, then for every \(T>0\),
\begin{equation*}
x_\mu\to x
\qquad \text{uniformly on }[0,T],
\end{equation*}
where \(x(\cdot)\) is the unique solution of \eqref{eq:F-formulation}.
\end{example}
\begin{example}[A constrained dry–friction system with state bounds]
\label{ex:dry-fric}
We consider a class of constrained first--order systems subject to dry friction  and hard bounds on the state. Such models arise in simplified descriptions of  actuator dynamics or dissipative mechanical systems, where the evolution results  from the interaction of three effects. The term $f(x)=\bar\tau-Kx$ represents an external action together with a  linear stabilizing feedback: the constant vector $\bar\tau$ models a forcing  or input, while $-Kx$ tends to drive the state toward an equilibrium.  The set-valued mapping $G(x)=\partial\psi(x)$ describes dry (Coulomb-type) 
friction acting componentwise. Away from zero, this friction has constant  magnitude $\mu_i$ and opposes the motion, while at $x_i=0$ it becomes  set-valued, reflecting the possibility of sticking.
The normal cone $N_C$ enforces the admissibility constraint $x(t)\in C$, generating  reaction forces whenever the state reaches the boundary of the box  $C=[x_{\min},x_{\max}]$ and preventing it from leaving the prescribed bounds.  This leads naturally to the differential inclusion
\[
\dot x(t)\in f(x(t))-G(x(t))-N_C(x(t)),
\]
which fits the abstract framework developed in the paper.
Consider the autonomous differential inclusion
\begin{equation}\label{eq:ex2-DI}
\dot x(t)\in f(x(t)) - A(x(t)),\qquad x(0)=x_0\in C,
\end{equation}
where $x(t)\in\R^n$ and $f(x):=\bar\tau-Kx,$
with $\bar\tau\in\R^n$ constant and $K\in\R^{n\times n}$ symmetric positive definite.\\
Let $x_{\min},x_{\max}\in\R^n$ with $x_{\min}<x_{\max}$ componentwise and define 
$C:=[x_{\min},x_{\max}]$. Let
\begin{equation*}
\psi(x):=\sum_{i=1}^n \mu_i |x_i|,\qquad \mu_i>0,
\end{equation*}
and define $A=\partial(\psi+\iota_C)$. Equivalently,
\begin{equation*}
A(x)=
\begin{cases}
\partial\psi(x)+N_C(x), & x\in C,\\
\emptyset, & x\notin C.
\end{cases}
\end{equation*}
Then $A$ is maximal monotone and $\cl(\dom A)=C$.
We identify the single--valued region
\begin{equation*}
E:=\{x\in \operatorname{int}(C): x_i\neq 0\ \forall i\}.
\end{equation*}
On $E$, one has
\begin{equation*}
A_0(x)=\bigl(\mu_1\sign(x_1),\dots,\mu_n\sign(x_n)\bigr).
\end{equation*}
Let $G:=\clco A_0$. In the present convex Lipschitz setting, $G(x)=\partial\psi(x)$ for all $x\in C$, hence
\begin{equation*}
A(x)=G(x)+N_C(x),\qquad x\in C,
\end{equation*}
Which separates the dry-friction law from the constraint reaction.
Define $F(x):=f(x)-G(x)=\bar\tau-Kx-\partial\psi(x)$.\\
\textbf{Verification of assumptions~\eqref{A1}--\eqref{A3}.}
Since $f$ is affine, it is globally Lipschitz, hence~(\ref{A1}) holds. The map $G=\partial\psi$ is bounded--valued: for any $g\in G(x)$, $|g_i|\le \mu_i$, so $\|g\|\le \sqrt{n}\,\mu_{\max}$ with $\mu_{\max}:=\max_i\mu_i$. Since $C$ is compact, $F$ is bounded on $C$, and~(\ref{A2}) holds. For~(\ref{A3}), since $F$ is bounded on $C$, there exists $L>0$ such that
\begin{equation*}
\sup_{v\in \proj_{T_C(x)}F(x)} \|v\|\le L \qquad \forall x\in C.
\end{equation*}
Hence $\langle x,v\rangle\le L\|x\|$ for all such $v$. Since $C$ is compact, $\|x\|\le R_C$ for some $R_C>0$. For any $\gamma>0$,
\begin{equation*}
L\|x\|\le (L R_C+\gamma R_C^2)-\gamma\|x\|^2,
\end{equation*}
so~(\ref{A3}) holds with $M:=L R_C+\gamma R_C^2$.\\
By Theorem~\ref{thm:global-wellposed}, for every $x_0\in C$, \eqref{eq:ex2-DI} admits a unique solution $x\in AC([0,+\infty);\R^n)$ with $x(t)\in C$ for all $t\ge0$, and
\begin{equation*}
\dot x(t)=\proj_{T_C(x(t))}F(x(t)) \quad \text{a.e. } t\ge0.
\end{equation*}
We specialize the catching--up scheme: for $w_k\in F(x_k)$,
\begin{equation*}
y_{k+1}=x_k+\mu_k w_k,\qquad x_{k+1}\in \proj_C^{\varepsilon_k}(y_{k+1}).
\end{equation*}
Let $p_k:=x_{k+1}-y_{k+1}$ and $v_k:=p_k/\mu_k$. Then
\begin{equation*}
v_k\in N_C^{\delta_k}(x_{k+1}),\qquad \delta_k:=\frac{\varepsilon_k}{2\mu_k},
\qquad
\frac{x_{k+1}-x_k}{\mu_k}=w_k-v_k.
\end{equation*}
Here $w_k$ is the unconstrained predictor velocity, while $v_k$ plays the role of a discrete reaction enforcing $x_{k+1}\in C$.
On any finite horizon $[0,T]$, Lemma~\ref{lem:disc_energy} yields, for any $c\in(0,2\gamma)$,
\begin{equation*}
\|x_{k+1}\|^2
\le \|x_k\|^2
   -c\,\mu_k\|x_k\|^2
   +C_0(T)\mu_k
   +C_1(T)\mu_k^2,
\qquad 0\le k<k_T,
\end{equation*}
which implies boundedness of $(x_k)$ on $[0,T]$. Moreover, Lemma~\ref{lem:proj-error} controls the projection errors $p_k$, and Proposition~\ref{prop:L2-feasibility} gives
\begin{equation*}
\int_0^T d_C^2(\widehat y_\mu(t))\,dt \to 0
\qquad \text{as }\|\mu\|_T \to 0.
\end{equation*}
Furthermore, Theorem~\ref{thm:limit_solution} shows that the interpolants admit subsequences converging uniformly on $[0,T]$ to solutions of the differential inclusion. In the present setting, since the perturbation $f(x)=\bar\tau-Kx$ is globally Lipschitz, Corollary~\ref{cor:full_convergence} applies and yields uniform convergence of the interpolants $x_\mu$ to the unique solution $x$ on $[0,T]$.\\
\textbf{Interpretation of the discrete dynamics.}
The decomposition 
\[
\frac{x_{k+1}-x_k}{\mu_k}=w_k-v_k
\]
admits a natural interpretation. The term $w_k\in F(x_k)$ represents the discrete driving force, combining the external input, the linear feedback, and the friction effect. The vector $v_k$ acts as a discrete constraint reaction, arising from the projection step and enforcing the condition $x_{k+1}\in C$. The correction $p_k=x_{k+1}-y_{k+1}$ measures the deviation from the unconstrained prediction.
The convergence results show that, as the discretization is refined, the discrete dynamics reproduces the continuous balance between driving forces and constraint reactions, while the feasibility defect of the predictor step vanishes.
\end{example}
\section{Conclusion}
\label{sec:conc}
We studied a class of differential inclusions governed by maximal monotone operators under continuous perturbations, together with a catching--up scheme based on approximate projections. Using the decomposition $A = G + N_C$, we separated the perturbation from the constraint and reformulated the dynamics in projected form. Under a tangent dissipativity condition, we established uniform boundedness of trajectories without assuming compactness of the constraint set.
At the discrete level, we proved a discrete energy inequality and showed that the scheme generates bounded trajectories and converges, up to subsequences, to solutions of the differential inclusion. When the perturbation is locally Lipschitz, full convergence follows from uniqueness.\\
To the best of our knowledge, this is one of the first works that systematically analyzes catching--up schemes for differential inclusions involving maximal monotone operators within such a decomposition framework, combining dissipativity, approximate projections, and convergence analysis in a unified setting. We also obtained quantitative stability estimates on finite horizons and established asymptotic feasibility of the predictor.
Finally, the examples illustrate the applicability of the framework to constrained nonsmooth systems. The approach provides a basis for further developments, including time-dependent operators, more general perturbations, and extensions toward controlled systems.
\bibliographystyle{plain}
\bibliography{refs}


\end{document}